\titleformat*{\subsection}{\large\bfseries}
\newtheorem{theorem}{Theorem}[section]
\newtheorem{lemma}[theorem]{Lemma}
\newtheorem{corollary}[theorem]{Corollary}
\newtheorem{conjecture}[theorem]{Conjecture}
\theoremstyle{definition}
\newtheorem*{remark}{Remark}
\newtheorem{definition}[theorem]{Definition}
\newtheorem{question}[theorem]{Question}
\newcommand{\Cay}{\mathrm{Cay}}
\newcommand{\id}{\mathrm{id}}
\newcommand{\CC}{\mathcal{C}}
\begin{document}

\title{Tiling the symmetric group by transpositions}

\author{\renewcommand{\thefootnote}{\arabic{footnote}}Teng Fang\footnotemark[1] , Binzhou Xia\footnotemark[2]}

\footnotetext[1]{Department of Mathematics, Suqian University, Jiangsu 223800, PR China}

\footnotetext[2]{{School} of Mathematics and Statistics, The University of Melbourne, Parkville, VIC 3010, Australia}

\renewcommand{\thefootnote}{}
\footnotetext{{\em E--mail addresses}: \texttt{tfangfm@foxmail.com} (T. Fang), \texttt{binzhoux@unimelb.edu.au} (B. Xia)}

\date{}

\maketitle

\begin{abstract}
For nonempty subsets $X$ and $Y$ of a group $G$, we say that $(X,Y)$ is a tiling of $G$ if every element of $G$ can be uniquely expressed as $xy$ for some $x\in X$ and $y\in Y$. In 1966, Rothaus and Thompson studied whether the symmetric group $S_n$ with $n\geq3$ admits a tiling $(T_n,Y)$, where $T_n$ consists of the identity and all the transpositions in $S_n$. They showed that no such tiling exists if $1+n(n-1)/2$ is divisible by a prime number at least $\sqrt{n}+2$. In this paper, we establish a new necessary condition for the existence of such a tiling: the subset $Y$ must be partition-transitive with respect to certain partitions of $n$. This generalizes the result of Rothaus and Thompson, as well as a result of Nomura in 1985. We also study whether $S_n$ can be tiled by the set $T_n^*$ of all the transpositions, which finally leads us to conjecture that neither $T_n$ nor $T_n^*$ tiles $S_n$ for any $n\geq4$.

\medskip\smallskip
{\it Key words: tiling; symmetric group; representation; partition-transitivity; perfect code; Cayley graph}

\smallskip
{\it AMS subject classification (2020): 20C30, 05E10, 05C25}

\end{abstract}

\section{Introduction}\label{sec:intro}

Let $G$ be a group. A pair $(X,Y)$ of subsets is called a \emph{tiling} of $G$ if every element of $G$ can be written as $xy$ for some unique $x\in X$ and $y\in Y$. We say that $X$ \emph{left-tiles} $G$ if $G$ has a tiling $(X,Y)$ for some subset $Y$. Similarly, $Y$ \emph{right-tiles} $G$ if $G$ has a tiling $(X,Y)$ for some subset $X$. A subset $S$ is said to \emph{tile} $G$ if it either left-tiles or right-tiles $G$. Note that, if $S$ is inverse-closed, then $S$ left-tiles $G$ if and only if $S$ right-tiles $G$.

For subsets $X$ and $Y$ of $G$, it is clear that $(X,Y)$ is a tiling of $G$ if and only if $(xX,Yy)$ is a tiling of $G$ for any $x,y\in G$. Therefore, when studying the tilings $(X,Y)$, one usually assumes that both $X$ and $Y$ contain the identity $\id$ of $G$. We refer to such a tiling as \emph{normalized}. (In some literature, what we call ``normalized tilings'' are simply referred to as ``tilings''.)

Tilings of groups, also referred to as \emph{factorizations} in the literature (see~\cite{SS2009} for example), are closely related to perfect codes in Cayley graphs (see Subsection~\ref{subsec:background}). First studied by Haj\'{o}s~\cite{Hajos1941} in his proof of a well-known conjecture of Minkowski, tilings of abelian groups have received extensive attention~\cite{SS2009}. However, much less is known about tilings of nonabelian groups. In 1960s, Rothaus and Thompson~\cite{RT1966} considered the possible tilings of the symmetric group $S_n$ by the subset
\[
T_n:=\{\id\}\cup\{(i,j)\mid 1\leq i<j\leq n\}
\]
consisting of the identity and all the transpositions in $S_n$. They proved the following result.

\begin{theorem}[Rothaus--Thompson~\cite{RT1966}]\label{RT-Theorem}
If $1+n(n-1)/2$ has a prime factor $p\geq\sqrt{n}+2$, then $T_n$ does not tile $S_n$.
\end{theorem}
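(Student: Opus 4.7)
The plan is to exploit the representation theory of $S_n$. Normalize the tiling to satisfy $\id \in Y$, and in $\mathbb{Q}[S_n]$ set $A = \sum_{x \in T_n} x$ and $B = \sum_{y \in Y} y$, so that the tiling condition reads $AB = \sum_{g \in S_n} g$. Since $A$ is $\id$ plus the class sum of transpositions, it is central in $\mathbb{Q}[S_n]$, and Schur's lemma (equivalently, the classical Jucys--Murphy identity) gives
\[
\rho_\lambda(A) \;=\; \bigl(1 + c(\lambda)\bigr)\,I_{f^\lambda}, \qquad c(\lambda) := \sum_{(i,j)\in\lambda}(j-i),
\]
for each irreducible representation $V_\lambda$ of $S_n$ (with $f^\lambda = \dim V_\lambda$). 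Applying $\rho_\lambda$ to the tiling equation, and using that $\sum_g g$ acts as zero on every non-trivial irreducible, one obtains $(1+c(\lambda))\rho_\lambda(B) = 0$, hence $\rho_\lambda(B) = 0$ whenever $c(\lambda) \ne -1$.

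The next step is to convert these vanishings into numerical divisibility constraints on $|Y|$. For $1 \le k \le n-1$, consider the permutation representation $M_k$ of $S_n$ on the $k$-subsets of $\{1,\ldots,n\}$, which decomposes as $M_k \cong \bigoplus_{i=0}^{\min(k,n-k)} V_{(n-i,i)}$. A direct computation shows $c((n-i,i)) = \binom{n-i}{2} + \binom{i-1}{2} - 1 \ne -1$ for every $n \ge 4$ and $1 \le i \le n/2$. Hence $\rho_{V_{(n-i,i)}}(B) = 0$ for each non-trivial summand, which forces
\[
\rho_{M_k}(B) \;=\; \frac{|Y|}{\binom{n}{k}}\,J,
\]
where $J$ is the $\binom{n}{k}\times\binom{n}{k}$ all-ones matrix. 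Since $\rho_{M_k}(B)$ is a sum of permutation matrices and therefore has non-negative integer entries, $\binom{n}{k}$ must divide $|Y|$, equivalently $|T_n|$ must divide $k!(n-k)!$ for every such $k$.

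Finally I would derive the contradiction from $p \ge \sqrt{n}+2$. This inequality gives $p^2 > n$, so Kummer's theorem limits $v_p\bigl(\binom{n}{k}\bigr) \le 1$ for all $k$. Writing $n = ap + b$ with $0 \le b < p$, the congruence $|T_n| \equiv 1 + \tfrac{b(b-1)}{2} \pmod{p}$ combined with $p \mid |T_n|$ excludes $b = p-1$, so some $k$ produces a carry in the base-$p$ addition $k + (n-k) = n$ and hence realises $v_p\!\bigl(k!(n-k)!\bigr) = \lfloor n/p\rfloor - 1$. When $\lfloor n/p\rfloor \le 1$ (that is, $n < 2p$) this directly contradicts $v_p(|T_n|) \ge 1$, finishing the proof in that range. \emph{The main obstacle is the range $n \ge 2p$}: here the $k$-subset bound can just fail to give a contradiction (for instance $n = 27$, $p = 11$, where $v_{11}(|T_n|) = 1$ exactly matches the available slack from $k$-subsets). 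Completing the argument then seemingly requires either the richer permutation representations on ordered $k$-tuples of distinct elements (after a careful analysis of partitions $\lambda \vdash n$ with $c(\lambda) = -1$ and $\lambda_1 \ge n-k$) or a combined use of several $k$-subset divisibilities together with structural information about $B$ inside the isotypic components indexed by $\{\lambda : c(\lambda) = -1\}$; the precise bound $p \ge \sqrt{n}+2$, rather than just $p > \sqrt{n}$, is presumably calibrated exactly to make this final step go through.
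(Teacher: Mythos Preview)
Your representation-theoretic setup is exactly right and matches the paper: the tiling equation $\overline{T_n}\,\overline{Y}=\overline{S_n}$ together with $\overline{T_n}$ acting as the scalar $1+\sum_{x\in D(\lambda)}\xi(x)$ on the $\lambda$-isotypic component forces $c^\lambda\overline{Y}=0$ whenever $\lambda\neq(n)$ and the content sum is not $-1$. From this the paper (via its Theorem~\ref{main-theorem1} and Corollary~\ref{main-corollary1}) deduces that $|T_n|$ divides $\lambda_1!\cdots\lambda_\ell!$ for every $\lambda=(\lambda_1,\dots,\lambda_\ell)\vdash n$ with nonnegative content sum.

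The genuine gap in your argument is that you then restrict to \emph{two-row} partitions $(n-k,k)$ (equivalently, the $k$-subset modules $M_k$). As you yourself observe, this only yields $p\mid|T_n|\Rightarrow p\mid k!(n-k)!$, and once $n\ge 2p$ the right-hand side always carries at least one factor of $p$, so no contradiction results. Your proposed repairs do not close the gap either: the ordered-tuple modules correspond to hooks $(n-k,1^k)$, whose content sum is $\binom{n-k}{2}-\binom{k+1}{2}\ge 0$ only for $k\le(n-1)/2$, and then $\lambda_1!\cdots\lambda_\ell!=(n-k)!$ is coprime to $p$ only when $n-k<p$; combining these again forces $n<2p-1$, the same barrier.

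The missing idea is to use a partition with \emph{many small parts}. The paper (in its proof of the strengthened Corollary~\ref{corol:corol1}) takes $n=(p-1)q+r$ with $0\le r\le p-2$ and chooses
\[
\lambda=((p-1)^q,r).
\]
Every part is at most $p-1$, so $\lambda_1!\cdots\lambda_\ell!=((p-1)!)^q\,r!$ is coprime to $p$, giving an immediate contradiction once the content sum is verified to be nonnegative. That verification is the short computation
\[
\sum_{i=1}^\ell\lambda_i(\lambda_i-2i+1)=(p-1)(p-1-q)q+r(r-2q-1),
\]
and the hypothesis $p\ge\sqrt{n}+2$ (indeed $p\ge\sqrt{n}+1$ suffices) gives $q\le n/(p-1)\le\sqrt{n}\le p-1$, from which nonnegativity follows. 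So the calibration of the bound on $p$ is precisely what makes this rectangular partition have nonnegative content sum, not what rescues a two-row argument.
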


The approach of Rothaus and Thompson~\cite{RT1966} to prove the above theorem relies on an ingenious construction of some permutation representation of $S_n$, where the condition ``$1+n(n-1)/2$ has a prime divisor $p\geq\sqrt{n}+2$'' plays an important role. However, following this approach, it seems hard to make further improvements.
In fact, necessary conditions for \( S_n \) to be tiled by \( T_n \) have been investigated in only a few papers~\cite{Etienne1987} and \cite{Nomura1985}. Notably, Nomura~\cite{Nomura1985} in 1985 established an interesting result concerning the multiple transitivity of \( Y \) in any tiling \( (T_n,Y) \) of \( S_n \). A subset \( Y \) of \( S_n \) is said to be \emph{\( k \)-transitive} if there exists a positive integer \( r \) such that, for every pair of \( k \)-tuples \( \alpha \) and \( \beta \) of distinct elements in \(\{1, \dots, n\} \), there are exactly \( r \) permutations in \( Y \) mapping \( \alpha \) to \( \beta \). The main result of~\cite{Nomura1985} is as follows.

\begin{theorem}[Nomura~\cite{Nomura1985}]\label{prop:prop2}
If $(T_n,Y)$ is a tiling of $S_n$, then $Y$ is $k$-transitive for each positive integer $k<n/2$.
\end{theorem}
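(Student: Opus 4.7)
The plan is to translate $k$-transitivity into a representation-theoretic condition, and then derive it from the tiling identity using centrality of the sum of transpositions. Let $M_k=\mathbb{C}X_k$ be the permutation module for the $S_n$-action on the set $X_k$ of ordered $k$-tuples of distinct elements of $\{1,\dots,n\}$. Unwinding the definition, $Y$ is $k$-transitive precisely when $B:=\sum_{y\in Y}y$ acts on $M_k$ as a scalar multiple of the all-ones matrix, equivalently, when $B$ annihilates every non-trivial irreducible constituent of $M_k$. Since $M_k=\mathrm{Ind}_{S_{n-k}}^{S_n}(\mathrm{triv})$, Frobenius reciprocity together with the branching rule identifies these non-trivial constituents as the Specht modules $S^\lambda$ with $\lambda\vdash n$, $\lambda\neq(n)$, and $\lambda_1\ge n-k$.

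Next I would exploit the tiling identity $A\cdot B=\sum_{g\in S_n}g$ in $\mathbb{C}[S_n]$, where $A=\sum_{t\in T_n}t=\id+(\text{sum of all transpositions})$. Since the sum of all transpositions is a class sum, $A$ lies in the centre of $\mathbb{C}[S_n]$, so on each irreducible $S^\lambda$ it acts as a scalar $a_\lambda$. A classical computation (Jucys--Murphy, or the central character formula for class sums) yields
\[
a_\lambda=1+\sum_{(i,j)\in\lambda}(j-i).
\]
On the other hand $\sum_{g\in S_n}g$ acts as zero on every non-trivial irreducible. Hence for every non-trivial $\lambda$ with $a_\lambda\neq 0$, the element $B$ acts as zero on $S^\lambda$. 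Combined with the first paragraph, this reduces the theorem to showing $a_\lambda>0$ for every $\lambda\vdash n$ with $\lambda_1\ge n-k$.

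This positivity bound is the main obstacle, and the approach is to isolate the contribution of the long first row. Write $m=n-\lambda_1\le k$ and let $\mu=(\lambda_2,\lambda_3,\dots)\vdash m$. Splitting off the first row and reindexing the remaining rows gives
\[
\sum_{(i,j)\in\lambda}(j-i)=\binom{n-m}{2}+\sum_{(i,j)\in\mu}(j-i)-m.
\]
The content sum of any partition of $m$ is at least $-\binom{m}{2}$, the minimum being attained at the column shape $(1^m)$. Hence
\[
a_\lambda\ge 1+\binom{n-m}{2}-\binom{m}{2}-m=1+\frac{n(n-2m-1)}{2},
\]
which is $\ge 1$ as soon as $2m\le n-1$. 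Since $k<n/2$ is an integer, $m\le k\le (n-1)/2$, so the bound delivers $a_\lambda\ge 1>0$ and the proof is complete. The hypothesis $k<n/2$ is used exactly here: it guarantees that the positive content contribution from the long first row of $\lambda$ dominates the potentially negative contribution from the tail $\mu$, and it is precisely this inequality one would try to push further to weaken the transitivity threshold.
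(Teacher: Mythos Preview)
Your proof is correct and follows essentially the same representation-theoretic strategy as the paper: rewrite the tiling as $\overline{T_n}\,\overline{Y}=\overline{S_n}$ in $\mathbb{C}S_n$, use that $\overline{T_n}$ is central and acts on $S^\mu$ by the scalar $1+\sum_{x\in D(\mu)}\xi(x)$, and deduce that $\overline{Y}$ annihilates every non-trivial irreducible constituent of the permutation module on ordered $k$-tuples (which is $M^{(n-k,1^k)}$), hence acts as a multiple of the all-ones matrix.

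The only substantive difference is in how the positivity of $a_\mu$ for the constituents $\mu\unrhd(n-k,1^k)$ is established. The paper derives Nomura's theorem from its main Theorem~\ref{main-theorem1} by observing that the hook $(n-k,1^k)$ has nonnegative content sum when $k<n/2$, and then invokes the monotonicity of content sums under dominance (Lemma~\ref{lem:cencha}(b), due to Diaconis--Shahshahani) to get $a_\mu\ge a_{(n-k,1^k)}\ge1$ for all $\mu\unrhd(n-k,1^k)$. You instead split off the first row and bound the tail's content sum by $-\binom{m}{2}$, which is a direct and self-contained estimate. Your route is slightly more elementary and also sidesteps the paper's auxiliary appeal to the tiling $(T_n,Y^{-1})$, but it is tailored to the hook case; the paper's monotonicity argument is what delivers the more general $\lambda$-transitivity statement of Theorem~\ref{main-theorem1}.
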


In the study of the multiple transitivity and homogeneity for groups and sets of permutations, Martin and Sagan \cite{MS2006} made a remarkable generalization of the notion of transitivity which is closely related to the representation theory of $S_n$.
Recall that a sequence $(\lambda_1,\ldots,\lambda_\ell)$ of integers is called a {\em partition} of $n$, denoted $(\lambda_1,\ldots,\lambda_\ell)\vdash n$, if $\lambda_1\geq\dots\geq\lambda_\ell>0$ and $\lambda_1+\cdots+\lambda_\ell=n$.
For convenience, we sometimes treat the sequence $(\lambda_1,\ldots,\lambda_\ell)$ as a multiset and use powers to indicate the multiplicities; for example, $(2,1^3):=(2,1,1,1)$.
An {\em ordered set partition} of $\Omega:=\{1,\ldots,n\}$ is a tuple $P=(P_1,\ldots,P_\ell)$ of pairwise disjoint subsets of $\Omega$ such that $|P_1|\geq\dots\geq|P_\ell|>0$ and $P_1\cup\dots\cup P_\ell=\Omega$. The integer partition $(|P_1|,\cdots,|P_\ell|)\vdash n$ is called the {\em shape} of the ordered set partition $P$.

\begin{definition}[Martin--Sagan \cite{MS2006}]\label{def:partrans}
Let $\lambda$ be a partition of a positive integer $n$. A subset $Y$ of $S_n$ is said to be {\em $\lambda$-transitive} if there exits a positive integer $r$ such that, for each ordered set partitions $P$ and $Q$ of shape $\lambda$, there are exactly $r$ permutations in $Y$ sending $P$ to $Q$.
\end{definition}

In the terminology of Definition~\ref{def:partrans}, the concept of $k$-transitivity is precisely $(n-k, 1^k)$-transitivity. Similarly, $k$-homogeneity is just $(n-k,k)$-transitivity. In this paper, we first reveal the close relationship between tilings of $S_n$ by $T_n$ and the notion of partition-transitivity. This does not only give a necessary condition for a tiling $(T_n,Y)$ of $S_n$ in terms of $\lambda$-transitivity for certain partitions $\lambda$ of $n$ (see Theorem~\ref{main-theorem1}), but also improves both Theorem~\ref{RT-Theorem} and Theorem~\ref{prop:prop2} (see Corollaries~\ref{corol:corol1} and~\ref{corol:corol2}).
We then apply the same approach to the tilings $(T_n^*,Y)$ of $S_n$, where
\[
T_n^*:=T_n\setminus\{\mathrm{id}\}=\{(i,j)\mid 1\leq i<j\leq n\}
\]
is the set of all the transpositions in $S_n$. Based on these, we pose a conjecture with some evidence that neither $T_n$ nor $T_n^*$ tiles $S_n$ for any $n\geq4$ (see Section~\ref{sec:conjecture}).

\subsection{Main results of the paper}\label{sec:MainResult}

For a partition $\lambda=(\lambda_1,\dots,\lambda_\ell)$ of a positive integer $n$, the {\em Young diagram} $D(\lambda)$ of $\lambda$ is an array of $n$ boxes having $\ell$ left-justified rows with row $i$ containing exactly $\lambda_i$ boxes for $i\in\{1,\dots,\ell\}$. In $D(\lambda)$, the {\em content} $\xi(x)$ of a box $x$ in the $i$-th row and $j$-th coloumn is defined by $\xi(x):=j-i$. Our main result is the following theorem and its corollaries.

\begin{theorem}\label{main-theorem1}
If $(T_n,Y)$ is a tiling of $S_n$, then for each $\lambda\vdash n$ with $\sum_{x\in D(\lambda)}\xi(x)\geq0$, the set $Y$ is $\lambda$-transitive.
\end{theorem}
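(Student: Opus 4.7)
My plan is to translate the question into the group algebra $\mathbb{C}[S_n]$ and apply standard symmetric group representation theory. Writing $\sigma_X:=\sum_{x\in X}x$ for $X\subseteq S_n$, the tiling hypothesis is equivalent to the algebra identity $\sigma_{T_n}\cdot\sigma_Y=\sigma_{S_n}$ in $\mathbb{C}[S_n]$. Two standard facts drive the argument. First, $(n!)^{-1}\sigma_{S_n}$ is the central idempotent for the trivial representation, so $\sigma_{S_n}$ annihilates every non-trivial Specht module $S^\mu$ (that is, $\mu\vdash n$ with $\mu\neq(n)$). Second, the class sum $\sigma_{T_n^*}$ of all transpositions is central, and the classical content-sum formula (deducible from the Frobenius character formula, or from Jucys--Murphy theory) says that $\sigma_{T_n^*}$ acts on $S^\mu$ as the scalar $c_\mu:=\sum_{x\in D(\mu)}\xi(x)$; consequently $\sigma_{T_n}=1+\sigma_{T_n^*}$ acts on $S^\mu$ as $1+c_\mu$.

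Applying the algebra identity to $S^\mu$ (with $\mu\neq(n)$) and writing $\rho^\mu$ for the representation on $S^\mu$ yields $(1+c_\mu)\,\rho^\mu(\sigma_Y)=0$, so $\rho^\mu(\sigma_Y)=0$ as soon as $c_\mu\neq -1$. I will then invoke the monotonicity of the content sum in the dominance order: from the identity $c_\mu=n(\mu')-n(\mu)$, with $n(\nu):=\sum_i(i-1)\nu_i$, together with the fact that $n(\cdot)$ is non-increasing in dominance, one obtains $c_\mu\geq c_\lambda$ whenever $\mu\trianglerighteq\lambda$. Under the hypothesis $c_\lambda\geq0$, this forces $1+c_\mu\geq1$ for every $\mu\trianglerighteq\lambda$ with $\mu\neq(n)$, and hence $\rho^\mu(\sigma_Y)=0$ on all such Specht modules.

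To conclude, I will interpret this inside the permutation module $M^\lambda=\mathbb{C}[S_n/S_\lambda]$ on ordered set partitions of shape $\lambda$. By Young's rule the isotypic components of $M^\lambda$ are indexed precisely by the partitions $\mu\trianglerighteq\lambda$; the one-dimensional trivial component is spanned by the all-ones vector, on which $\sigma_Y$ acts as multiplication by $|Y|$, while by the previous step $\sigma_Y$ annihilates every other isotypic. Since the action of any algebra element on a $\mu$-isotypic factors as $\rho^\mu(\cdot)\otimes\mathrm{id}$ on the multiplicity space, vanishing on $S^\mu$ forces vanishing on the whole isotypic. Hence in the ordered-set-partition basis the matrix of $\sigma_Y$ is a constant multiple of the all-ones matrix, which is exactly the statement that $|\{y\in Y:y(P)=Q\}|$ is independent of $P$ and $Q$, i.e.\ $Y$ is $\lambda$-transitive. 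The conceptual heart of the argument is that the tiling identity separates cleanly on each Specht module thanks to the centrality of $\sigma_{T_n}$, and that the hypothesis $c_\lambda\geq0$ dovetails exactly with the dominance monotonicity of $c_\cdot$ to avoid the single forbidden value $c_\mu=-1$ on the entire range $\{\mu\trianglerighteq\lambda:\mu\neq(n)\}$ that Young's rule forces us to control; that is where the sharpness of the hypothesis lives.
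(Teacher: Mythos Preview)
Your proof is correct and follows essentially the same approach as the paper: translate the tiling to a group-algebra identity, use the content-sum eigenvalue formula and its dominance monotonicity to kill $\sigma_Y$ on every nontrivial Specht module appearing in $M^\lambda$ via Young's rule, and read off $\lambda$-transitivity from the resulting all-ones action. Your argument is in fact slightly cleaner than the paper's, which takes an extra detour through the companion tiling $(T_n,Y^{-1})$ to pass from $\overline{Y}[s]=\overline{Y}[t]$ to the conclusion that all matrix entries coincide, whereas your projection onto the trivial component of $M^\lambda$ yields this directly.
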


\begin{remark}
One can readily verify that, for $\lambda=(\lambda_1,\dots,\lambda_\ell)\vdash n$, the content sum
\[
\sum_{x\in D(\lambda)}\xi(x)=\sum_{i=1}^\ell\frac{\lambda_i(\lambda_i-2i+1)}{2}.
\]
Thus, $\sum_{x\in D(\lambda)}\xi(x)\geq0$ if and only if $\sum_{i=1}^\ell\lambda_i(\lambda_i-2i+1)\geq0$.
\end{remark}

\begin{corollary}\label{main-corollary1}
If $T_n$ tiles $S_n$, then for each partition $\lambda=(\lambda_1,\dots,\lambda_\ell)$ of $n$ such that $\sum_{i=1}^\ell\lambda_i(\lambda_i-2i+1)\geq0$, the integer $1+n(n-1)/2$ divides $\lambda_1!\cdots\lambda_\ell!$.
\end{corollary}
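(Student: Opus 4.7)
The plan is to derive Corollary~\ref{main-corollary1} as a straightforward consequence of Theorem~\ref{main-theorem1} via a double-counting argument, using the definition of $\lambda$-transitivity together with the cardinality constraint $|T_n|\cdot|Y|=|S_n|$.

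First I would reduce to the existence of a tiling of the form $(T_n,Y)$. Since $T_n$ is inverse-closed (every transposition is an involution and so is the identity), the remark in the introduction tells us that $T_n$ left-tiles $S_n$ if and only if it right-tiles $S_n$; hence the hypothesis ``$T_n$ tiles $S_n$'' yields a tiling $(T_n,Y)$. Then by Theorem~\ref{main-theorem1}, for every partition $\lambda=(\lambda_1,\dots,\lambda_\ell)\vdash n$ with $\sum_{i=1}^\ell\lambda_i(\lambda_i-2i+1)\geq0$ (equivalently, nonnegative content sum), the set $Y$ is $\lambda$-transitive. So there is a positive integer $r$ such that for all ordered set partitions $P,Q$ of shape $\lambda$, exactly $r$ elements of $Y$ send $P$ to $Q$.

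Next I would fix one ordered set partition $P$ of shape $\lambda$ and count, in two ways, the pairs $(\sigma,Q)$ with $\sigma\in Y$, $Q$ an ordered set partition of shape $\lambda$, and $\sigma(P)=Q$. Summing over $Q$ using $\lambda$-transitivity gives $r\cdot \frac{n!}{\lambda_1!\cdots\lambda_\ell!}$, since there are exactly $\binom{n}{\lambda_1,\dots,\lambda_\ell}=n!/(\lambda_1!\cdots\lambda_\ell!)$ ordered set partitions of shape $\lambda$. On the other hand, each $\sigma\in Y$ sends $P$ to a unique such $Q$, so the count equals $|Y|$. Therefore
\[
|Y|=r\cdot\frac{n!}{\lambda_1!\cdots\lambda_\ell!}.
\]

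Finally, since $(T_n,Y)$ is a tiling, $|T_n|\cdot|Y|=|S_n|=n!$, and $|T_n|=1+\binom{n}{2}=1+n(n-1)/2$. Substituting gives
\[
\lambda_1!\cdots\lambda_\ell! \;=\; r\cdot\bigl(1+n(n-1)/2\bigr),
\]
and since $r$ is a positive integer, $1+n(n-1)/2$ divides $\lambda_1!\cdots\lambda_\ell!$, as required. There is no real obstacle here beyond correctly invoking Theorem~\ref{main-theorem1}; the only subtlety is the left/right-tiling reduction, which is handled by inverse-closedness of $T_n$.
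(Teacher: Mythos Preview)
Your proof is correct and follows essentially the same approach as the paper: both derive Corollary~\ref{main-corollary1} from Theorem~\ref{main-theorem1} by observing that $\lambda$-transitivity forces $|Y|=r\cdot n!/(\lambda_1!\cdots\lambda_\ell!)$ and then combining this with $|Y|=n!/|T_n|$ to obtain $\lambda_1!\cdots\lambda_\ell!=r\,(1+n(n-1)/2)$. Your explicit handling of the left/right tiling reduction via inverse-closedness of $T_n$ is a welcome clarification that the paper leaves implicit.
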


Theorem~\ref{main-theorem1} also enables us to obtain the following conclusion that slightly strengthens Nomura's result (Theorem~\ref{prop:prop2}).

\begin{corollary}\label{corol:corol2}
If $(T_n,Y)$ is a tiling of $S_n$, then $Y$ is $k$-transitive for each positive integer $k\leq n/2$.
\end{corollary}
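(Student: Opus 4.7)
For $k\leq(n-1)/2$, we set $\lambda=(n-k,1^k)\vdash n$ and use the formula in the remark to observe that
\[
\sum_{x\in D(\lambda)}\xi(x)=\frac{(n-k)(n-k-1)-k(k+1)}{2}=\frac{n(n-2k-1)}{2}\geq 0.
\]
Theorem~\ref{main-theorem1} then yields that $Y$ is $(n-k,1^k)$-transitive, which by definition is $k$-transitivity. This handles all relevant $k$ when $n$ is odd, and all $k\leq n/2-1$ when $n$ is even.

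The only case left is $n=2m$ even with $k=m$, where $\lambda=(m,1^m)$ has content sum $-m$ and Theorem~\ref{main-theorem1} does not apply directly. The plan is to revisit the proof of Theorem~\ref{main-theorem1} and extract the slightly stronger statement that $Y$ is $\lambda$-transitive provided every partition $\mu\vdash n$ with $\mu\unrhd\lambda$ and $\mu\neq(n)$ has content sum different from $-1$. Indeed, decomposing the Young permutation module $M^\lambda=\bigoplus_\mu K_{\mu,\lambda}V^\mu$ (over the irreducibles $V^\mu$ with $\mu\unrhd\lambda$), and combining the tiling identity $\sum_{\sigma\in T_n}\sigma\cdot\sum_{\sigma\in Y}\sigma=\sum_{g\in S_n}g$ with Frobenius's formula---which states that $\sum_{\sigma\in T_n}\sigma$ acts on $V^\mu$ by the scalar $1+\sum_{x\in D(\mu)}\xi(x)$---will force $\sum_{\sigma\in Y}\sigma$ to vanish on every nontrivial $V^\mu$ whose scalar is nonzero, and this vanishing is precisely $\lambda$-transitivity.

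Applied to $\lambda=(m,1^m)$, the task reduces to verifying that no partition $\mu$ of $2m$ with $\mu_1\geq m$ and $\mu\neq(2m)$ has content sum equal to $-1$. The description $\mu\unrhd(m,1^m)\Leftrightarrow\mu_1\geq m$ makes it easy to check that $(m,2,1^{m-2})$ is the unique partition covering $(m,1^m)$ in the dominance order. Since content sums are strictly monotone under dominance-covering, every $\mu\rhd(m,1^m)$ with $\mu\neq(m,1^m)$ satisfies
\[
\sum_{x\in D(\mu)}\xi(x)\geq\sum_{x\in D((m,2,1^{m-2}))}\xi(x)=0,
\]
while $(m,1^m)$ itself has content sum $-m\leq-2$ for $m\geq 2$. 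Neither value is $-1$, so the extracted criterion applies and $Y$ is $m$-transitive. The main technical point will be the cover analysis, a short combinatorial check; the remaining representation-theoretic machinery is exactly that of Theorem~\ref{main-theorem1}.
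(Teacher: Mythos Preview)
Your proposal is correct and follows essentially the same route as the paper. Both arguments apply Theorem~\ref{main-theorem1} directly for $k<n/2$, and for the boundary case $n=2k$ both revisit the proof of Theorem~\ref{main-theorem1} to check that every $\mu\unrhd(k,1^k)$ has central character $\omega^\mu_{(2,1^{n-2})}\neq-1$, using that any $\mu$ strictly dominating $(k,1^k)$ already dominates $(k,2,1^{k-2})$ and hence has content sum at least~$0$; your cover analysis makes this last step more explicit than the paper's one-line assertion, but the content is identical.
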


As another application of Theorem \ref{main-theorem1}, we are able to slightly weaken the condition $p\geq\sqrt{n}+2$ in Rothaus and Thompson's result (Theorem \ref{RT-Theorem}) to $p\geq\sqrt{n}+1$. This leads to a slightly strengthened version of their result in Corollary~\ref{corol:corol1}.

In~\cite{RT1966}, Rothaus and Thompson briefly remarked that, if $n(n-1)/2$ is divisible by a prime $p\geq\sqrt{n}+2$, then $T_n^*$ does not tile $S_n$. Inspired by this, we establish several results on the tilings of $S_n$ by $T_n^*$ that are parallel to those by $T_n$.

\begin{theorem}\label{main-theorem2}
Suppose that $(T_n^*,Y)$ is a tiling of $S_n$. Then for each partition $\lambda$ of $n$ with $\sum_{x\in D(\lambda)}\xi(x)>0$, the set $Y$ is $\lambda$-transitive. In particular, $Y$ is $k$-transitive for each positive integer $k\leq(n-2)/2$.
\end{theorem}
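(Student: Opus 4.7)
The plan is to mirror the group-algebra/representation-theoretic strategy of Theorem~\ref{main-theorem1}, the only arithmetic change being the removal of $\id$ from $T_n$. Writing $[X]:=\sum_{x\in X}x$ in $\mathbb{C}[S_n]$, the tiling hypothesis reads $[T_n^*]\cdot[Y]=[S_n]$. I would apply each irreducible representation $\rho_\mu$ of $S_n$ (indexed by $\mu\vdash n$) to this identity. The right-hand side vanishes for every $\mu\neq(n)$. On the left, since the class sum $\sum_{1\le i<j\le n}(i,j)$ is central in $\mathbb{C}[S_n]$ and acts on the Specht module $S^\mu$ as multiplication by the content sum $c_\mu:=\sum_{x\in D(\mu)}\xi(x)$, we have $\rho_\mu([T_n^*])=c_\mu I$. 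Hence $c_\mu\,\rho_\mu([Y])=0$ for every $\mu\neq(n)$, so $\rho_\mu([Y])=0$ whenever $c_\mu\neq 0$. This is precisely where the argument departs from Theorem~\ref{main-theorem1}: there the corresponding scalar was $1+c_\mu$ and the condition $c_\mu\neq-1$ was implied by $c_\lambda\ge 0$, whereas here the bare scalar $c_\mu$ requires $c_\mu\neq 0$, which forces the strict hypothesis $c_\lambda>0$.

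I would then combine this with the classical monotonicity of content sums under the dominance order on partitions of $n$: $\mu\trianglerighteq\lambda\Rightarrow c_\mu\ge c_\lambda$. The hypothesis $c_\lambda>0$ therefore propagates to $c_\mu>0$, and hence $\rho_\mu([Y])=0$, for every $\mu\trianglerighteq\lambda$ with $\mu\neq(n)$. Using the standard Young decomposition $M^\lambda\cong\bigoplus_{\mu\trianglerighteq\lambda}K_{\mu\lambda}S^\mu$ of the permutation module, it follows that $[Y]$ annihilates the augmentation kernel of $M^\lambda$ and therefore acts on the ordered-set-partition basis as a scalar multiple of the all-ones matrix---which is exactly the Martin--Sagan~\cite{MS2006} definition of $\lambda$-transitivity. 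For the ``in particular'' clause, $k$-transitivity is $(n-k,1^k)$-transitivity and a direct computation gives
\[
c_{(n-k,1^k)}=\binom{n-k}{2}-\binom{k+1}{2}=\frac{n(n-2k-1)}{2},
\]
which is strictly positive exactly when $k\le(n-2)/2$; applying the first part of the theorem to $\lambda=(n-k,1^k)$ then yields the claim.

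The main obstacle is the same one already present in the proof of Theorem~\ref{main-theorem1}: cleanly establishing the equivalence between ``$\rho_\mu([Y])=0$ for all non-trivial $\mu\trianglerighteq\lambda$'' and the combinatorial definition of $\lambda$-transitivity, through the Kostka decomposition of $M^\lambda$. Once that equivalence and the dominance monotonicity of $c_\mu$ are in hand---both are classical, the latter being verified by tracking what happens to a content under a single box-raising cover of the dominance order---no new ideas beyond those used for $T_n$ are needed; the sole substantive change is the replacement of $1+c_\mu$ by $c_\mu$, which is what tightens $c_\lambda\ge 0$ to $c_\lambda>0$.
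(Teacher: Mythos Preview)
Your proposal is correct and follows essentially the same route as the paper: replace the scalar $1+\omega^\mu_{(2,1^{n-2})}$ in Lemma~\ref{lem:conjusum} by $\omega^\mu_{(2,1^{n-2})}$, deduce the analogue of Lemma~\ref{lem:necessary} with the condition $\omega^\mu_{(2,1^{n-2})}\neq0$, and then feed this into the dominance-monotonicity/Kostka-decomposition argument of Theorem~\ref{main-theorem1}. Your content-sum computation $c_{(n-k,1^k)}=n(n-2k-1)/2$ for the ``in particular'' clause is also exactly what is needed.
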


\begin{corollary}\label{main-corollary2}
If $T_n^*$ tiles $S_n$, then for each partition $\lambda=(\lambda_1,\dots,\lambda_\ell)$ of $n$ such that $\sum_{i=1}^\ell\lambda_i(\lambda_i-2i+1)>0$, the integer $n(n-1)/2$ divides $\lambda_1!\cdots\lambda_\ell!$.
\end{corollary}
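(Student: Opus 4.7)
The plan is to deduce Corollary~\ref{main-corollary2} from Theorem~\ref{main-theorem2} by a short double-counting argument, completely parallel to how Corollary~\ref{main-corollary1} follows from Theorem~\ref{main-theorem1}. First I would observe that $T_n^*$ is inverse-closed, so $T_n^*$ tiles $S_n$ precisely when $(T_n^*,Y)$ is a tiling for some $Y\subseteq S_n$; from $|T_n^*|\cdot|Y|=n!$ and $|T_n^*|=n(n-1)/2$, this forces $|Y|=2(n-2)!$.

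Next, fix any partition $\lambda=(\lambda_1,\dots,\lambda_\ell)\vdash n$ with $\sum_{i=1}^\ell\lambda_i(\lambda_i-2i+1)>0$. By the Remark following Theorem~\ref{main-theorem1}, this is exactly the hypothesis $\sum_{x\in D(\lambda)}\xi(x)>0$ of Theorem~\ref{main-theorem2}, so that theorem applies and supplies a positive integer $r$ such that any two ordered set partitions of shape $\lambda$ are connected by exactly $r$ elements of $Y$.

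To finish, I would fix one ordered set partition $P$ of shape $\lambda$ and note that each $\sigma\in Y$ sends $P$ to a uniquely determined ordered set partition $\sigma(P)=(\sigma(P_1),\dots,\sigma(P_\ell))$, while the total number of ordered set partitions of $\{1,\dots,n\}$ of shape $\lambda$ equals the multinomial coefficient $n!/(\lambda_1!\cdots\lambda_\ell!)$. Summing $r$ over all targets gives
\[
|Y|=r\cdot\frac{n!}{\lambda_1!\cdots\lambda_\ell!},
\]
and substituting $|Y|=2(n-2)!$ yields $r=\lambda_1!\cdots\lambda_\ell!/(n(n-1)/2)$. Since $r$ must be a positive integer, $n(n-1)/2$ divides $\lambda_1!\cdots\lambda_\ell!$, as claimed. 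There is essentially no obstacle beyond this bookkeeping, as all the representation-theoretic content has already been isolated in Theorem~\ref{main-theorem2}; the only point requiring a moment of care is the count of ordered set partitions when $\lambda$ has equal parts, which the \emph{ordered} convention handles automatically (no additional symmetry factor is introduced).
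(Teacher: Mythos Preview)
Your proposal is correct and follows essentially the same route as the paper: the paper proves Corollary~\ref{main-corollary2} by invoking the $\lambda$-transitivity from Theorem~\ref{main-theorem2} and then running the identical counting argument used for Corollary~\ref{main-corollary1}, namely that $r=|Y|\big/\big(n!/(\lambda_1!\cdots\lambda_\ell!)\big)$ is an integer and $|Y|=n!/|T_n^*|$, whence $|T_n^*|=n(n-1)/2$ divides $\lambda_1!\cdots\lambda_\ell!$. Your observation about the count of ordered set partitions with repeated parts is accurate and matches the paper's implicit use of $n!/(\lambda_1!\cdots\lambda_\ell!)$.
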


Based on the above results, we believe that neither $T_n$ nor $T_n^*$ tiles $S_n$ for any $n\geq4$ (see Conjecture~\ref{conj:conj1}), and various attempts are made in Section~\ref{sec:conjecture} towards this conjecture.

\subsection{More background}\label{subsec:background}

The problem whether $T_n$ tiles $S_n$ is a long-standing open problem. In~\cite{Schmidt2002}, it is referred to as ``the $S_n$ problem''.
Rothaus and Thompson~\cite{RT1966} used the terminology ``divide'' instead of ``tile''. Diaconis~\cite[Pages~45--46]{Diaconis1989} linked the Rothaus--Thompson approach~\cite{RT1966} to the study of random walks on $S_n$ and gave the coding-theoretic background of the $S_n$ problem, as illustrated below.

In a simple graph $\Gamma$, a {\em perfect code} \cite{Biggs1973} is a set $C$ of vertices such that every vertex of $\Gamma$ is at distance at most one to exactly one vertex in $C$.
Given a group $G$ and a subset $S$ of $G$, the {\em Cayley digraph} $\Cay(G,S)$ is the digraph with vertex set $G$ and arcs $(g,h)$ whenever $hg^{-1} \in S$. Clearly, $\Cay(G,S)$ is undirected if and only $S$ is inverse-closed, and $\Cay(G,S)$ is loop-free if and only if $\mathrm{id}\notin S$.
It is readily seen that a subset $Y$ of $G$ is a {perfect code} in $\Cay(G,S)$ if and only if every element of $G$ can be written uniquely as a product $xy$ with $x\in S\cup\{\mathrm{id}\}$ and $y\in Y$, or equivalently, if and only if $(S\cup\{\mathrm{id}\},Y)$ is a tiling of $G$. Therefore, the $S_n$ problem is precisely concerned with the existence of perfect codes in $\Cay(S_n,T_n^*)$.

Similarly, a subset $C$ of vertices is said to be a {\em total perfect code}~\cite{Zhou2016} in a simple graph $\Gamma$ if every vertex of $\Gamma$ has exactly one neighbor in $C$. Hence, $(T_n^*,Y)$ is a tiling of $S_n$ if and only if $Y$ is a total perfect code in $\Cay(S_n,T_n^*)$. In graph theory, a perfect code is also called an \emph{efficient dominating set} \cite{DS2003} or {\em independent perfect dominating set}~\cite{Lee2001}, and a total perfect code is called an {\em efficient open dominating set}~\cite{HHS1998}.

The problem of tiling $S_n$ by certain subsets of $T_n$ is considered in~\cite{EW1990}. As a natural generalization of tilings, an {\em $r$-tiling} of a group $G$ is a pair $(X,Y)$ of subsets of $G$ such that every element of $G$ can be written precisely in $r$ ways as $xy$ for some $x\in X$ and $y\in Y$. $r$-Tilings $(X,Y)$ of $S_n$ and $\mathrm{SL}_2(q)$ such that $X$ is closed under conjugation are studied in~\cite{Fang,FLZ2021,GL2020,Terada2004}.

\subsection{Structure of the paper}

After this introduction, we assemble in Section \ref{sec:prelim} preliminary results on the representations of symmetric groups. Section~\ref{sec:proofofmainresults} starts with some technical lemmas for tilings of $S_n$ in the language of group algebra and is followed by two more subsections. We tie together all the preparation to prove Theorem~\ref{main-theorem1} in Subsection~\ref{subsec:proofofmaintheorem}. Then in Subsection~\ref{subsec:corols} some corollaries and related results
of Theorem~\ref{main-theorem1} are obtained. Finally, we pose the conjecture on the tilings of $S_n$ by transpositions, with various attempts, in Section~\ref{sec:conjecture}.

\section{Representation-theoretic results}\label{sec:prelim}

In this section, we collect some notions and results from representation theory that will be needed in the paper.

\subsection{Representations of finite groups}

Let $\mathbb{C}G$ be the group algebra of a finite group $G$ over $\mathbb{C}$, and identify the representations of $G$ (over $\mathbb{C}$) as $\mathbb{C}G$-modules. A submodule of the left regular representation of $G$ is exactly a left ideal of $\mathbb{C}G$, and an irreducible $\mathbb{C}$-linear representation of $G$ is equivalent to a minimal left ideal of $\mathbb{C}G$. By Maschke's theorem, each left ideal of $\mathbb{C}G$ is a direct sum of minimal left ideals of $\mathbb{C}G$.

For a character $\chi$ of a representation of $G$, we naturally ($\mathbb{C}$-linearly) extend it to $\chi\colon\mathbb{C}G\rightarrow\mathbb{C}$, and denote
\begin{equation}\label{equ:3}
c_\chi:=\dfrac{\chi(\mathrm{id})}{|G|}\sum\limits_{g\in G}\chi(g^{-1})g\ \text{ and }\  I_\chi:=(\mathbb{C}G)c_\chi.
\end{equation}
The results in the following lemma are well known and can be read off from~\cite[\S33]{CR1962}.

\begin{lemma}\label{lem:decomp}
Let $\chi_1,\dots,\chi_r$ be a complete set of irreducible characters of $G$. Then the following statements hold:
\begin{enumerate}[{\rm(a)}]
\item $c_{\chi_1},\dots,c_{\chi_r}$ form a $\mathbb{C}$-basis for the center of $\mathbb{C}G$.
\item For each $i\in\{1,\dots,r\}$, the element $c_{\chi_i}$ is the identity of $I_{\chi_i}$ and annihilates $I_{\chi_j}$ for all $j\in\{1,\dots,r\}\setminus\{i\}$. In particular, each $c_{\chi_i}$ is a central idempotent of $\mathbb{C}G$, and $I_{\chi_i}I_{\chi_j}=I_{\chi_j}I_{\chi_i}=0$ for distinct $i$ and $j$ in $\{1,\dots,r\}$.
\item $1=\sum_{i=1}^rc_{\chi_i}$, and hence $\mathbb{C}G=\bigoplus_{i=1}^rI_{\chi_i}$ is a decomposition of $\mathbb{C}G$ into simple two-sided ideals with the multiplication by $c_{\chi_i}$ being the projection into $I_{\chi_i}$.
\item For each $i\in\{1,\dots,r\}$, the two-sided ideal $I_{\chi_i}$ is the sum of all the minimal left ideals of $\mathbb{C}G$ corresponding to the character $\chi_i$.% and is a direct sum of $\chi_i(\mathrm{id})$ minimal left ideals among them.
\end{enumerate}
\end{lemma}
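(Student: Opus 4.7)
The plan is to derive all four statements from the first orthogonality relation for irreducible characters together with Schur's lemma, inside the standard Wedderburn framework for the semisimple algebra $\mathbb{C}G$. At the outset I would note that each $c_\chi$ lies in the centre of $\mathbb{C}G$, since the coefficient function $g \mapsto \chi(g^{-1})$ is constant on conjugacy classes, and hence $c_\chi$ is a linear combination of class sums.

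My first real step will be to prove (b), from which the remaining parts follow with little extra work. Given any irreducible representation $\rho_j\colon G\to\GL(V_j)$ affording $\chi_j$, the operator
\[
\rho_j(c_{\chi_i}) \;=\; \dfrac{\chi_i(\id)}{|G|}\sum_{g\in G}\chi_i(g^{-1})\rho_j(g)
\]
commutes with every $\rho_j(h)$ because $c_{\chi_i}$ is central, so Schur's lemma forces it to be a scalar on $V_j$. Taking traces and invoking $\sum_g\chi_i(g^{-1})\chi_j(g)=|G|\delta_{ij}$ pins this scalar down as $\delta_{ij}$. Hence on any minimal left ideal $L$ of $\mathbb{C}G$ affording $\chi_j$, left multiplication by $c_{\chi_i}$ equals $\delta_{ij}\cdot\id_L$; decomposing $I_{\chi_j}$ via Maschke into such summands then yields (b) in full, including that $c_{\chi_i}$ is idempotent and that $I_{\chi_i}I_{\chi_j}=0$ for $i\neq j$.

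For (a), the computation just made gives linear independence: from $\sum a_i c_{\chi_i}=0$, multiplying by $c_{\chi_j}$ yields $a_j c_{\chi_j}=0$, so all $a_j=0$; since the centre of $\mathbb{C}G$ has dimension equal to the number of conjugacy classes of $G$, which is $r$, the $c_{\chi_i}$ form a basis. For (c), I will observe that $1-\sum_i c_{\chi_i}$ annihilates every irreducible $\mathbb{C}G$-module by (b), hence annihilates $\mathbb{C}G$ itself by semisimplicity, so it is zero; the resulting decomposition $\mathbb{C}G=\bigoplus_i I_{\chi_i}$ is automatically into two-sided ideals since the $c_{\chi_i}$ are central, and multiplication by $c_{\chi_i}$ is the projection onto the $i$-th summand. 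Finally, (d) follows from (b) and Maschke: write $\mathbb{C}G$ as a direct sum of minimal left ideals, each affording some $\chi_j$, and apply $c_{\chi_i}$ to see that exactly the summands with $j=i$ survive.

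The only delicate point I anticipate is being careful with the scalar factor $\chi_i(\id)/|G|$ so that $c_{\chi_i}$ emerges as a genuine idempotent rather than merely proportional to one; beyond that, the argument is entirely classical and no substantive obstacle arises.
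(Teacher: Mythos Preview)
Your proposal is correct and follows the standard textbook route via Schur's lemma and character orthogonality. The paper does not give its own proof of this lemma at all; it simply states that the results are well known and can be read off from \cite[\S33]{CR1962}, so there is no in-paper argument to compare against.
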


\subsection{Representations of $S_n$}\label{subsec:repofSn}

Let $\lambda=(\lambda_1,\ldots,\lambda_\ell)$ and $\mu=(\mu_1,\ldots,\mu_m)$ be partitions of $n$. We say that $\mu$ {\em dominates} $\lambda$, written $\mu\unrhd\lambda$, if
\[
\mu_1+\mu_2+\cdots+\mu_i\geq\lambda_1+\lambda_2+\cdots+\lambda_i
\]
for each positive integer $i$, where, if $i>\ell$ (respectively, $i>m$), then we take $\lambda_i$ (respectively, $\mu_i$) to be $0$.
A {\em Young tableau of shape $\lambda$} (also called a {\em $\lambda$-tableau}) is an array obtained by replacing the boxes of the Young diagram of $\lambda$ with the numbers $1,2,\ldots,n$ bijectively. For a $\lambda$-tableau $t$, the subgroup $H(t)$ of $S_n$ fixing each row of $t$ is called the {\em horizontal group} ({\em row group}) of $t$, and the subgroup $V(t)$ of $S_n$ fixing each column of $t$ is called the {\em vertical group} ({\em column group}) of $t$.

Let $\lambda=(\lambda_1,\ldots,\lambda_\ell)$ be a partition of $n$. Two $\lambda$-tableaux $s$ and $t$ are said to be {\em row-equivalent}, written $s\sim t$, if for each $i\in\{1,2,\ldots,\ell\}$, the sets of numbers in the $i$-th row of $s$ and $t$ are equal. A {\em $\lambda$-tabloid} is a row-equivalence class of $\lambda$-tableaux, and the $\mathbb{C}S_n$-module $M^\lambda$ corresponding to $\lambda$ is the $\mathbb{C}$-vector space with the $\lambda$-tabloids as basis and the natural permutation action of $S_n$ on $\lambda$-tabloids. For a $\lambda$-tableau $t$, we denote the tabloid $\{s\,|\,s\sim t\}$ by $[t]$ and call
\[
e_t:=\sum_{\sigma\in V(t)}\mathrm{sgn}(\sigma)\sigma[t]
\]
a {\em $\lambda$-polytabloid}. The {\em Specht module $S^\lambda$} is the submodule of $M^\lambda$ spanned by all $\lambda$-polytabloids. The results in the following lemma can be found in~\cite[\S2.4 and \S2.11]{Sagan2001} and~\cite[Corollary~2.2.22]{JK1981}.

\begin{lemma}\label{lem:specht}
The following statements hold:
\begin{enumerate}[{\rm(a)}]
\item The set of Specht modules $\{S^\lambda\}_{\lambda\vdash n}$ forms a complete set of pairwise inequivalent irreducible $\mathbb{C}S_n$-modules.

\item For each $\lambda\vdash n$, the $\mathbb{C}S_n$-module $M^\lambda$ decomposes as
\[
M^\lambda\,\cong\,\bigoplus_{\mu\,\unrhd\, \lambda} K_{\mu\lambda}S^\mu,
\]
where $K_{\mu\lambda}$ is a nonnegative integer, known as the Kostka number, such that $K_{\lambda\lambda}=1$ and that $K_{\mu\lambda}\geq 1$ if and only if $\mu\unrhd\lambda$.
\end{enumerate}

\end{lemma}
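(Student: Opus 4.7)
The plan is to follow James' classical construction of the irreducible representations of the symmetric group, as developed in Sagan's textbook.

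For part (a), the goal is to show that each $S^\lambda$ is a nonzero irreducible $\mathbb{C}S_n$-module, that the $S^\lambda$ are pairwise inequivalent, and that they exhaust the irreducibles. The key tool is the \emph{Submodule Theorem}. I would equip $M^\lambda$ with the $\mathbb{C}$-bilinear form for which the tabloid basis is orthonormal, and introduce the element $\kappa_t := \sum_{\sigma\in V(t)}\mathrm{sgn}(\sigma)\sigma\in \mathbb{C}S_n$, so that $e_t=\kappa_t[t]$. A sign-reversing argument on tabloids shows that for any $\lambda$-tableau $t$ and any $\lambda$-tabloid $[s]$, the product $\kappa_t[s]$ equals either $\pm e_t$ or $0$. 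Consequently, for any $\mathbb{C}S_n$-submodule $U$ of $M^\lambda$, either $U$ contains some $e_t$ (and hence all of $S^\lambda$) or $\kappa_t U=0$ for every $t$, which forces $U\subseteq(S^\lambda)^\perp$. Since the form is non-degenerate and the characteristic is zero, $S^\lambda\cap(S^\lambda)^\perp=0$, and irreducibility of $S^\lambda$ follows.

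For pairwise inequivalence, I would use James' lemma that any nonzero $\mathbb{C}S_n$-homomorphism $\phi\colon S^\lambda\to M^\mu$ forces $\lambda\unrhd\mu$. Hence $\mathrm{Hom}_{\mathbb{C}S_n}(S^\lambda,S^\mu)\neq 0$ requires both $\lambda\unrhd\mu$ and $\mu\unrhd\lambda$, i.e.\ $\lambda=\mu$. Completeness is then a counting argument: the number of partitions of $n$ equals the number of conjugacy classes of $S_n$, and therefore equals the number of inequivalent irreducible $\mathbb{C}S_n$-modules.

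For part (b), since $M^\lambda\cong\mathrm{Ind}_{S_\lambda}^{S_n}(\mathbf{1})$, where $S_\lambda$ is the Young subgroup of shape $\lambda$, Maschke's theorem gives $M^\lambda\cong\bigoplus_{\mu\vdash n}c_{\mu\lambda}\, S^\mu$ with $c_{\mu\lambda}=\dim\mathrm{Hom}_{\mathbb{C}S_n}(S^\mu,M^\lambda)$. One then constructs, for each semistandard Young tableau $T$ of shape $\mu$ and content $\lambda$, an explicit $\mathbb{C}S_n$-homomorphism $\phi_T\colon S^\mu\to M^\lambda$ via the ``semistandard basis theorem'', and verifies that the $\phi_T$ form a basis of $\mathrm{Hom}_{\mathbb{C}S_n}(S^\mu,M^\lambda)$. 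This identifies $c_{\mu\lambda}$ with the Kostka number $K_{\mu\lambda}$. The combinatorial facts that $K_{\lambda\lambda}=1$ (the only filling of $D(\lambda)$ by content $\lambda$ that is semistandard is the one whose $i$-th row consists entirely of $i$'s) and $K_{\mu\lambda}\geq 1$ iff $\mu\unrhd\lambda$ are then checked directly from the definition. The main obstacles are the proof of the Submodule Theorem in (a) and the identification of $\mathrm{Hom}_{\mathbb{C}S_n}(S^\mu,M^\lambda)$ via semistandard homomorphisms in (b); both rest on delicate sign-reversing involutions and Garnir relations, but are standard and fully treated in Sagan's text, which is why the present paper is content to cite the result.
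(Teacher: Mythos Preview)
Your proposal is correct and follows precisely the route the paper takes: the paper does not supply its own argument but simply cites \cite[\S2.4,~\S2.11~and~\S3.10]{Sagan2001}, and what you have sketched is exactly James' approach as presented in Sagan---the Submodule Theorem for part~(a) and the semistandard homomorphism basis (Young's rule) for part~(b). There is nothing to add or correct.
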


In what follows, we will write $\chi^\lambda$ for the {\em character} of the irreducible representation $S^\lambda$ and write $\chi^\lambda(\mu)$ or $\chi^\lambda_\mu$ for the character value of $\chi^\lambda$ at a conjugacy class with cycle type $\mu\vdash n$.
With the notation in~\eqref{equ:3}, we write
\begin{equation}\label{equ:4}
c^\lambda:=c_{\chi^\lambda}\ \text{ and }\  I^\lambda:=I_{\chi^\lambda}.
\end{equation}
For a subset $A$ of $S_n$, denote
\[
\overline{A}:=\sum_{a\in A}a \in\mathbb{C}S_n.
\]
Since there is only one $(n)$-tabloid, the $\mathbb{C}S_n$-module $M^{(n)}$ is trivial, and so is $S^{(n)}$ by Lemma~\ref{lem:specht}. Therefore,
\begin{equation}\label{equ:6}
c^{(n)}=\frac{1}{n!}\overline{S_n}\ \text{ and }\ I^{(n)}=\mathbb{C}\overline{S_n}.
\end{equation}
Let $\CC_\mu$ denote the conjugacy class in $S_n$ with cycle type $\mu$. We will also need the notion of the {\em central character}
\begin{equation}\label{equ:5}
\omega^\lambda_\mu:=\frac{|\CC_\mu|\chi^\lambda_\mu}{\chi^\lambda(\mathrm{id})},
\end{equation}
where $\chi^\lambda(\mathrm{id})=\chi^\lambda_{(1^n)}$ is the degree of $\chi^\lambda$. Recall from Subsection~\ref{sec:MainResult} the definition of the content $\xi(x)$ of a box $x$ in a Young diagram. Interestingly, the central character $\omega^{\lambda}_{(2,1^{n-2})}$ is equal to the content sum $\sum_{x\in D(\lambda)}\xi(x)$~\cite{Frobenius1901,Ingram1950} (see also~\cite{CGS2004}) and is monotonic with respect to the dominance order~\cite[Lemma 10]{DS1981}. We record these results in the next lemma.

\begin{lemma}\label{lem:cencha}
Let $\lambda$ and $\mu$ be partitions of $n$. Then the following statements hold:
\begin{enumerate}[{\rm(a)}]
\item $\omega^{\lambda}_{(2,1^{n-2})}=\sum_{x\in D(\lambda)}\xi(x)$.
\item If $\mu\unrhd\lambda$ and $\mu\neq\lambda$, then $\omega^\mu_{(2,1^{n-2})}>\omega^\lambda_{(2,1^{n-2})}$.
\end{enumerate}
\end{lemma}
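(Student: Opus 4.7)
The plan for (a) is to apply the spectral theory of Jucys--Murphy elements. Define $J_k=(1,k)+(2,k)+\cdots+(k-1,k)\in\mathbb{C}S_n$ for $k=2,\ldots,n$, so that $\overline{\CC_{(2,1^{n-2})}}=\sum_{k=2}^n J_k$ is the class sum of all transpositions. I would invoke the classical result (Okounkov--Vershik, or equivalently Young's semi-normal form) that the Gelfand--Tsetlin basis $\{v_T\}$ of $S^\lambda$, indexed by standard Young tableaux $T$ of shape $\lambda$, simultaneously diagonalizes $J_2,\ldots,J_n$, with the eigenvalue of $J_k$ on $v_T$ equal to the content of the box containing $k$ in $T$. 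Summing over $k$ (the box holding $1$ contributes content $0$), we find that $\overline{\CC_{(2,1^{n-2})}}$ acts on every $v_T$ by the same scalar $\sum_{x\in D(\lambda)}\xi(x)$. Since $\overline{\CC_{(2,1^{n-2})}}$ is central in $\mathbb{C}S_n$, Schur's lemma forces its action on the irreducible module $S^\lambda$ to be scalar, and by~\eqref{equ:5} that scalar is precisely $\omega^\lambda_{(2,1^{n-2})}$. Comparing the two values yields (a).

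For (b), I would use the explicit formula
\[
\sum_{x\in D(\lambda)}\xi(x)=\sum_{i=1}^\ell\frac{\lambda_i(\lambda_i-2i+1)}{2}
\]
from the remark following Theorem~\ref{main-theorem1}, and reduce the strict inequality to the case where $\mu$ covers $\lambda$ in the dominance order. A standard fact about the dominance lattice is that every relation $\mu\rhd\lambda$ decomposes into a chain of single-box raising moves, each transporting one corner box from some row $j$ to the end of an earlier row $i$ (with $i<j$) while keeping the shape a partition. A direct calculation shows that such a move changes the content sum by
\[
(\lambda_i+1-i)-(\lambda_j-j)=(\lambda_i-\lambda_j)+(j-i)+1\geq 0+1+1>0,
\]
using $\lambda_i\geq\lambda_j$ and $j>i$. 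Iterating this along any chain from $\lambda$ to $\mu$ yields (b).

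The heart of the argument is part (a), which rests on the Jucys--Murphy spectral theorem; an alternative historical route would be to extract the value of $\chi^\lambda_{(2,1^{n-2})}$ directly from the Frobenius character formula and substitute into~\eqref{equ:5}, as in~\cite{Frobenius1901,Ingram1950} (see also~\cite{CGS2004}). Either route is well documented, so in the write-up I would simply cite these sources rather than reprove (a). Part (b) is then almost immediate, the only subtlety being the reduction to covering relations in the dominance lattice, which is a standard combinatorial fact.
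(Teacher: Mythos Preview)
Your proposal is correct. Note, however, that the paper does not actually prove this lemma: it simply records part~(a) with citations to~\cite{Frobenius1901,Ingram1950,CGS2004} and part~(b) with a citation to~\cite[Lemma~10]{DS1981}, so there is no ``paper's own proof'' to compare against beyond those references.

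What you have written therefore goes further than the paper. Your Jucys--Murphy argument for~(a) is a clean modern alternative to the classical extraction from the Frobenius character formula, and it is entirely valid: the class sum of transpositions is $\sum_{k=2}^n J_k$, the Gelfand--Tsetlin basis diagonalizes each $J_k$ with eigenvalue the content of box~$k$, and the total is basis-independent, matching the central-character scalar by Schur's lemma. Your argument for~(b) via a chain of single-box raising moves is essentially the content of~\cite[Lemma~10]{DS1981}; the computation $(\lambda_i+1-i)-(\lambda_j-j)=(\lambda_i-\lambda_j)+(j-i)+1\geq 2$ is correct. Since you yourself conclude that in the write-up you would cite rather than reprove, you end up exactly where the paper is.
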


Both $M^\lambda$ and $S^\lambda$ have their left ideal avatars in the group algebra $\mathbb{C}S_n$, which we now describe.
Let $\lambda\vdash n$, and let $t$ be a $\lambda$-tableau. For a subgroup $H$ of $S_n$, the mapping $w\overline{H}\mapsto wH$ (this mapping is the $\mathbb{C}$-linear span of the mapping $g \overline{H}\mapsto gH$ for each $g\in S_n$) is a $\mathbb{C} S_n$-module isomorphism from $(\mathbb{C}S_n) \overline{H}$ to the permutation representation of $S_n$ by left multiplication on the set $S_n/H$ of the left cosets of $H$. Hence, $(\mathbb{C}S_n) \overline{H(t)}$ is isomorphic to $M^\lambda$ by~\cite[Theorem 2.1.12]{Sagan2001}. Denote
\[
E_t:=\sum_{\pi\in V(t)}\sum_{\rho\in H(t)}\mathrm{sgn}(\pi)\pi\rho\ \text{ and }\  L^\lambda:=(\mathbb{C}S_n)E_t.
\]
It turns out that $L^\lambda$ is an avatar of $S^\lambda$ in $\mathbb{C}S_n$ (see for instance~\cite[Lemma 7.1.4]{JK1981}). Therefore, we have the following lemma.

\begin{lemma}\label{lem:spechtavatar}
Let $\lambda\vdash n$, and let $t$ be a $\lambda$-tableau. Then there hold the following isomorphisms of $\mathbb{C}S_n$-modules:
\begin{enumerate}[{\rm(a)}]
\item The left ideal $(\mathbb{C}S_n)\overline{H(t)}$ of $\mathbb{C}S_n$ is isomorphic to $M^\lambda$.
\item The left ideal $L^\lambda$ of $\mathbb{C}S_n$ is isomorphic to $S^\lambda$.
\end{enumerate}
\end{lemma}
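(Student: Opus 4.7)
The plan is to treat parts (a) and (b) in sequence, exploiting the same identification of $\lambda$-tabloids with left cosets of $H(t)$ in $S_n$.

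For (a), the first observation is that two $\lambda$-tableaux $s$ and $t$ satisfy $s\sim t$ if and only if $s=ht$ for some $h\in H(t)$; in particular $H(t)$ is exactly the stabilizer of the tabloid $[t]$ under the $S_n$-action on $\lambda$-tableaux. Consequently, the $\lambda$-tabloids are in natural $S_n$-equivariant bijection with the coset space $S_n/H(t)$ via $[gt]\leftrightarrow gH(t)$, which identifies $M^\lambda$ with the permutation module $\mathbb{C}[S_n/H(t)]$. It then suffices to show that the $\mathbb{C}$-linear extension of $g\overline{H(t)}\mapsto gH(t)$ (for $g\in S_n$) is a well-defined isomorphism from $(\mathbb{C}S_n)\overline{H(t)}$ to $\mathbb{C}[S_n/H(t)]$. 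Well-definedness follows because $g\overline{H(t)}=g'\overline{H(t)}$ if and only if $g^{-1}g'\in H(t)$, which is equivalent to $gH(t)=g'H(t)$; the map is clearly $S_n$-equivariant and surjective, and injectivity follows either by comparing dimensions (both spaces have dimension $[S_n:H(t)]$) or by a direct argument using distinct coset representatives.

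For (b), the key algebraic manipulation is to factor
\[
E_t=\Bigl(\sum_{\pi\in V(t)}\mathrm{sgn}(\pi)\pi\Bigr)\overline{H(t)},
\]
which exhibits $E_t$ as an element of $(\mathbb{C}S_n)\overline{H(t)}$. Under the isomorphism established in (a), $E_t$ is carried to $\sum_{\pi\in V(t)}\mathrm{sgn}(\pi)\pi[t]=e_t\in M^\lambda$. Therefore $L^\lambda=(\mathbb{C}S_n)E_t$ is mapped onto the cyclic $\mathbb{C}S_n$-submodule $(\mathbb{C}S_n)e_t$ of $M^\lambda$. It remains to identify this submodule with $S^\lambda$: since $S_n$ acts transitively on $\lambda$-tableaux and $g\cdot e_t=e_{gt}$ for every $g\in S_n$, the cyclic module $(\mathbb{C}S_n)e_t$ contains every $\lambda$-polytabloid, and hence equals $S^\lambda$ by the definition of the Specht module.

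The only substantive point is the well-definedness in step (a); everything else is a bookkeeping verification once the coset-tabloid dictionary is in place. I expect the main technical care to lie in confirming that the map $g\overline{H(t)}\mapsto gH(t)$ respects the relations among the generators of $(\mathbb{C}S_n)\overline{H(t)}$, i.e. in checking that the natural correspondence between tabloids and cosets of the row stabilizer is genuinely bijective and $S_n$-equivariant. Once this is secured, part (b) is essentially a translation of the definition of $e_t$ into the group algebra via the factorization of $E_t$ together with the standard fact that $S^\lambda$ is cyclically generated by any single polytabloid.
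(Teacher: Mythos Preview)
Your proposal is correct and follows essentially the same approach as the paper: the paper sketches the map $w\overline{H}\mapsto wH$ as a $\mathbb{C}S_n$-module isomorphism from $(\mathbb{C}S_n)\overline{H}$ to $\mathbb{C}[S_n/H]$, invokes \cite[Theorem~2.1.12]{Sagan2001} for the identification $\mathbb{C}[S_n/H(t)]\cong M^\lambda$, and cites \cite[Lemma~7.1.4]{JK1981} for part~(b), whereas you simply unpack these citations into the explicit tabloid--coset dictionary and the factorization $E_t=\bigl(\sum_{\pi\in V(t)}\mathrm{sgn}(\pi)\pi\bigr)\overline{H(t)}$. The only minor addition worth noting is that your cyclicity argument for $S^\lambda=(\mathbb{C}S_n)e_t$ via $g\cdot e_t=e_{gt}$ is exactly the standard one underlying the cited references, so the two arguments are the same in substance.
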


\section{$\lambda$-transitivity and tilings of $S_n$ by transpositions}\label{sec:proofofmainresults}

In this section, we prove Theorem~\ref{main-theorem1}--Corollary~\ref{main-corollary2}, as well as a slightly strengthened version (see~Corollary~\ref{corol:corol1}) of Rothaus--Thompson's result (Theorem~\ref{RT-Theorem}).

\subsection{Technical lemmas}

By definition, $(X,Y)$ is a tiling of a group $G$ if and only if for each $g\in G$ there exists a unique $(x,y)\in X\times Y$ satisfying $g=xy$, which is equivalent to the equation $\overline{G}=\overline{X}\,\overline{Y}$ in $\mathbb{C}G$.
For a subset $X$ that is closed under conjugation in $G$, the pair $(X,Y)$ is a tiling of $G$ if and only if $(X,gYh)$ is a tiling of $G$ for any $g,h\in G$. Moreover, if $X$ is closed under conjugation in $G$, then $\overline{X}$ is central in $\mathbb{C}G$ and so $\overline{X}\,\overline{Y}=\overline{Y}\,\overline{X}$. In this case, if $X$ is inverse-closed in addition, then $(X,Y)$ being a tiling of $G$ is equivalent to $(X,Y^{-1})$ being a tiling of $G$, where $Y^{-1}:=\{y^{-1}\,|\,y\in Y\}$. We summarize these observations in the following lemma.

\begin{lemma}\label{lem:tiling}
Let $X$ and $Y$ be subsets of a group $G$. Then the following statements hold:
\begin{enumerate}[{\rm(a)}]
\item $(X,Y)$ is a tiling of $G$ if and only if $\overline{G}=\overline{X}\,\overline{Y}$.
\item If $(X,Y)$ is a tiling of $G$ such that $X$ is closed under conjugation in $G$, then $(X,gYh)$ is also a tiling of $G$ for all $g,h\in G$.
\item If $(X,Y)$ is a tiling of $G$ such that $X$ is closed under inversion and conjugation in $G$, then $(X,Y^{-1})$ is also a tiling of $G$.
\end{enumerate}
\end{lemma}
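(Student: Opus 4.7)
The plan is to convert the combinatorial tiling condition into a single algebraic identity in $\mathbb{C}G$ and then exploit commutativity coming from the conjugation-closedness of $X$. All three parts are essentially formal manipulations in the group algebra, once one has the right dictionary.

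For part (a), I would simply expand the product $\overline{X}\,\overline{Y}=\sum_{x\in X,\,y\in Y}xy$ in $\mathbb{C}G$ and compare coefficients with $\overline{G}=\sum_{g\in G}g$. The coefficient of a given $g\in G$ on the left counts the number of factorizations $g=xy$ with $x\in X$ and $y\in Y$, while the coefficient on the right is $1$. Equality of these two group-algebra elements is therefore literally the existence-and-uniqueness statement defining a tiling. This part requires nothing beyond unpacking definitions.

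For part (b), the key observation is that conjugation-closedness of $X$ means $g^{-1}Xg=X$ for every $g\in G$, and hence $\overline{X}$ lies in the center of $\mathbb{C}G$. Starting from $\overline{X}\,\overline{Y}=\overline{G}$ (given by part (a)), I would write $\overline{gYh}=g\overline{Y}h$, slide $\overline{X}$ past $g$ using centrality, and absorb $g$ and $h$ into $\overline{G}$ via $g\overline{G}=\overline{G}=\overline{G}h$. Applying part (a) in the reverse direction then yields the claim for $(X,gYh)$.

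For part (c), I would apply the $\mathbb{C}$-linear antiautomorphism $\iota\colon\mathbb{C}G\to\mathbb{C}G$ sending each $g\in G$ to $g^{-1}$; it sends $\overline{A}$ to $\overline{A^{-1}}$ for any subset $A\subseteq G$ and reverses the order of multiplication. Applying $\iota$ to $\overline{X}\,\overline{Y}=\overline{G}$ produces $\overline{Y^{-1}}\,\overline{X^{-1}}=\overline{G}$, and inverse-closedness of $X$ turns this into $\overline{Y^{-1}}\,\overline{X}=\overline{G}$. The centrality of $\overline{X}$ (from conjugation-closedness, as in part (b)) lets me swap the two factors to get $\overline{X}\,\overline{Y^{-1}}=\overline{G}$, so that $(X,Y^{-1})$ is a tiling by part (a). There is no real obstacle in any of the three parts; the only point requiring a moment of care is that (b) and (c) genuinely need conjugation-closedness of $X$ so that $\overline{X}$ can be moved past arbitrary group-algebra elements.
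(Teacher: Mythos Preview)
Your proposal is correct and matches the paper's approach essentially line for line: the paper proves (a) by coefficient comparison in $\mathbb{C}G$, deduces (b) from the centrality of $\overline{X}$ when $X$ is conjugation-closed, and obtains (c) by combining centrality with inverse-closedness (your use of the antiautomorphism $\iota$ just makes explicit the step the paper leaves implicit). There is nothing to add or correct.
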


Since $T_n$ is closed under conjugation in $S_n$, the element $\overline{T_n}$ of $\mathbb{C} S_n$ is central.
Recall the definition of $c^\lambda$ and $I^\lambda$ in~\eqref{equ:4}, where $\lambda$ is a partition of $n$, and recall the definition of the central character in~\eqref{equ:5}. The next lemma shows that the multiplication by $\overline{T_n}$ restricted on $I^\lambda$ is the scalar multiplication  of $1+\omega^\lambda_{(2,1^{n-2})}$.

\begin{lemma}\label{lem:conjusum}
Let $\lambda\vdash n$. Then $\overline{T_n}\,u=u\,\overline{T_n}=\big(1+\omega^\lambda_{(2,1^{n-2})}\big)u$ for all $u\in I^\lambda$.
\end{lemma}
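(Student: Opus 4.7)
The plan is to reduce the statement to the classical fact that conjugacy-class sums act by scalars on the isotypic components of a finite group's group algebra, and then to identify the scalar as the central character. First I would write $T_n=\{\mathrm{id}\}\sqcup\mathcal{C}_{(2,1^{n-2})}$, so that $\overline{T_n}=1+\overline{\mathcal{C}_{(2,1^{n-2})}}$ in $\mathbb{C}S_n$; the problem thus reduces to showing that multiplication by $\overline{\mathcal{C}_{(2,1^{n-2})}}$ acts on $I^\lambda$ as the scalar $\omega^\lambda_{(2,1^{n-2})}$.

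The key observation is that any conjugacy-class sum $\overline{\mathcal{C}_\mu}$ lies in the center $Z(\mathbb{C}S_n)$ because $\mathcal{C}_\mu$ is closed under conjugation. By Lemma~\ref{lem:decomp}(b)-(c), the ideals $I^\nu$ are mutually annihilating and their direct sum is $\mathbb{C}S_n$; since $\overline{\mathcal{C}_\mu}$ is central, it restricts to a $\mathbb{C}S_n$-bimodule endomorphism of each $I^\lambda$, and in particular $\overline{\mathcal{C}_\mu}\,u=u\,\overline{\mathcal{C}_\mu}$ automatically for $u\in I^\lambda$. Combining this with Lemma~\ref{lem:decomp}(d), $I^\lambda$ is a sum of minimal left ideals each affording the irreducible character $\chi^\lambda$; by Schur's lemma, left multiplication by $\overline{\mathcal{C}_\mu}$ on any such minimal left ideal is a scalar $\alpha_\lambda\in\mathbb{C}$. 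Since every element of $I^\lambda$ lies in a finite sum of such minimal ideals, the same scalar $\alpha_\lambda$ governs multiplication on all of $I^\lambda$.

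To identify $\alpha_\lambda$, I would take the trace of $\overline{\mathcal{C}_\mu}$ acting on one minimal left ideal $L\subseteq I^\lambda$ affording $\chi^\lambda$: on the one hand this trace equals $\alpha_\lambda\chi^\lambda(\mathrm{id})$, while on the other it equals $\sum_{g\in\mathcal{C}_\mu}\chi^\lambda(g)=|\mathcal{C}_\mu|\chi^\lambda_\mu$. Solving gives $\alpha_\lambda=|\mathcal{C}_\mu|\chi^\lambda_\mu/\chi^\lambda(\mathrm{id})=\omega^\lambda_\mu$ by~\eqref{equ:5}. Specializing to $\mu=(2,1^{n-2})$ and adding back the contribution of $\mathrm{id}$ yields $\overline{T_n}\,u=(1+\omega^\lambda_{(2,1^{n-2})})u$ for every $u\in I^\lambda$, with $u\,\overline{T_n}=\overline{T_n}\,u$ by centrality.

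No step here constitutes a genuine obstacle: the argument is essentially a single invocation of Schur's lemma together with the decomposition already recorded in Lemma~\ref{lem:decomp}. If there is any subtlety, it lies only in cleanly justifying that the scalar extracted from a single minimal left ideal governs the entire simple two-sided ideal $I^\lambda$, but this is immediate from the direct-sum decomposition of $I^\lambda$ into minimal left ideals in Lemma~\ref{lem:decomp}(d). Alternatively, one could simply compute $\overline{\mathcal{C}_\mu}\,c^\lambda$ directly from the definitions, use that $c^\lambda$ is the identity of $I^\lambda$ by Lemma~\ref{lem:decomp}(b), and read off the scalar from the coefficient of $\mathrm{id}$, but the trace approach above is the most transparent.
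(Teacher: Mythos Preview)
Your proof is correct and follows essentially the same route as the paper: both invoke Schur's lemma on a minimal left ideal affording $\chi^\lambda$, identify the scalar via a trace computation, and then extend to all of $I^\lambda$ using Lemma~\ref{lem:decomp}(d). The only cosmetic difference is that you split $\overline{T_n}=1+\overline{\mathcal{C}_{(2,1^{n-2})}}$ and compute the central character for the class sum first, whereas the paper applies the trace argument directly to $\overline{T_n}$.
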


\begin{proof}
Consider the minimal left ideal $L^\lambda$ (see Lemmas~\ref{lem:specht} and~\ref{lem:spechtavatar}) and the mapping
\[
\varphi\colon L^\lambda\rightarrow L^\lambda,\ \ u\mapsto \overline{T_n}\,u.
\]
Since $\overline{T_n}$ lies in the center of $\mathbb{C}S_n$, then by Schur's lemma, there exists some constant $d\in\mathbb{C}$ such that $\varphi(u)=du$ for all $u\in L^\lambda$. Hence we have
\[
\chi^\lambda(\mathrm{id})d=\mathrm{Tr}(\varphi)=\chi^\lambda(\overline{T_n})
=\chi^\lambda(\mathrm{id})+|\CC_{(2,1^{n-2})}|\chi^\lambda_{(2,1^{n-2})}
=\chi^\lambda(\mathrm{id})+\chi^\lambda(\mathrm{id})\omega^\lambda_{(2,1^{n-2})},
\]
where $\mathrm{Tr}(\varphi)$ is the trace of $\varphi$. This leads to $d=1+\omega^\lambda_{(2,1^{n-2})}$. In other words,
$\overline{T_n}\,u=\big(1+\omega^\lambda_{(2,1^{n-2})}\big)u$ for all $u\in L^\lambda$.
Note from Lemma~\ref{lem:decomp} that $I^\lambda$ is the sum of all the minimal left ideals of $\mathbb{C}G$ corresponding to the character $\chi^\lambda$. We conclude that $\overline{T_n}\,u=\big(1+\omega^\lambda_{(2,1^{n-2})}\big)u$ for all $u\in I^\lambda$, as required.
\qed
\end{proof}

\medskip
Based on the above discussion, we obtain the following necessary condition for $(T_n,Y)$ to be a tiling of $S_n$.

\begin{lemma}\label{lem:necessary}
Suppose that $(T_n,Y)$ is a tiling of $S_n$ with $n\geq3$. Then $c^\lambda\overline{Y}=0$ for each $\lambda\vdash n$ such that $\lambda\neq (n)$ and $\omega^\lambda_{(2,1^{n-2})}\neq -1$, or equivalently, $\overline{Y}\in I^{(n)}\boldsymbol{\oplus}\bigg(\bigoplus_{\lambda\in\Lambda}I^\lambda\bigg)$, where $\Lambda:=\{\lambda\vdash n\mid\omega^\lambda_{(2,1^{n-2})}=-1\}$.
\end{lemma}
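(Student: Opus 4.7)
The plan is to translate the tiling hypothesis into an equation in the group algebra and then project onto the isotypic components $I^\lambda$. By Lemma~\ref{lem:tiling}(a), the assumption that $(T_n,Y)$ is a tiling of $S_n$ is equivalent to $\overline{S_n}=\overline{T_n}\,\overline{Y}$. Since $T_n$ is closed under conjugation, $\overline{T_n}$ is central in $\mathbb{C}S_n$; so for each $\lambda\vdash n$ I can multiply both sides by the central idempotent $c^\lambda$ of Lemma~\ref{lem:decomp} and read off a statement about the $\lambda$-component of $\overline{Y}$.

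First I would handle $\lambda\neq (n)$. On the left, $c^\lambda\overline{S_n}=0$, because $\overline{S_n}\in I^{(n)}$ by~\eqref{equ:6} and, by Lemma~\ref{lem:decomp}(b), $c^\lambda$ annihilates $I^{(n)}$ whenever $\lambda\neq(n)$. On the right, since $c^\lambda\overline{Y}\in I^\lambda$, Lemma~\ref{lem:conjusum} gives
\[
c^\lambda\,\overline{T_n}\,\overline{Y}=\overline{T_n}\,(c^\lambda\overline{Y})=\bigl(1+\omega^\lambda_{(2,1^{n-2})}\bigr)\,c^\lambda\overline{Y}.
\]
Equating the two sides yields $\bigl(1+\omega^\lambda_{(2,1^{n-2})}\bigr)\,c^\lambda\overline{Y}=0$, and whenever $\omega^\lambda_{(2,1^{n-2})}\neq -1$ this forces $c^\lambda\overline{Y}=0$, which is the first assertion of the lemma.

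For the equivalent reformulation I would invoke the decomposition $\mathbb{C}S_n=\bigoplus_{\lambda\vdash n}I^\lambda$ from Lemma~\ref{lem:decomp}(c), under which $\overline{Y}=\sum_{\lambda\vdash n}c^\lambda\overline{Y}$ with $c^\lambda\overline{Y}\in I^\lambda$. Deleting the summands already shown to vanish leaves precisely $\overline{Y}\in I^{(n)}\oplus\bigoplus_{\lambda\in\Lambda}I^\lambda$. I do not expect a substantive obstacle here: the real work—Schur's lemma giving the scalar by which $\overline{T_n}$ acts on each $I^\lambda$, together with the Wedderburn-style block decomposition of $\mathbb{C}S_n$—has already been packaged in Lemmas~\ref{lem:decomp} and~\ref{lem:conjusum}, so this proof is purely a matter of projecting the tiling identity $\overline{S_n}=\overline{T_n}\,\overline{Y}$ componentwise.
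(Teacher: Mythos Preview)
Your proof is correct and follows essentially the same route as the paper: translate the tiling into $\overline{T_n}\,\overline{Y}=\overline{S_n}$, multiply by $c^\lambda$, use that $c^\lambda\overline{S_n}=0$ for $\lambda\neq(n)$ and that $\overline{T_n}$ acts on $I^\lambda$ as the scalar $1+\omega^\lambda_{(2,1^{n-2})}$, and then read off the block decomposition of $\overline{Y}$. The only detail the paper adds that you omit is the observation (needed so that the displayed direct sum is genuinely direct) that $(n)\notin\Lambda$, since $\omega^{(n)}_{(2,1^{n-2})}=n(n-1)/2\neq-1$ for $n\geq3$.
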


\begin{proof}
Let $\lambda\vdash n$ such that $\lambda\neq (n)$ and $\omega^\lambda_{(2,1^{n-2})}\neq-1$. Note from~\eqref{equ:6} that $I^{(n)}=\mathbb{C}\overline{S_n}$. Then $c^\lambda\overline{S_n}=0$ by Lemma~\ref{lem:decomp}, and $c^\lambda\,\overline{T_n}=\big(1+\omega^\lambda_{(2,1^{n-2})}\big)c^\lambda$ by Lemma \ref{lem:conjusum}.
%By Lemmas~\ref{lem:specht},~\ref{lem:spechtavatar}, the minimal left ideal $L^{(n)}$ has dimension $1$ over $\mathbb{C}$, and so $I^{(n)}\cong L^{(n)}$ has dimension $1$ over $\mathbb{C}$.
Since $(T_n,Y)$ is a tiling of $S_n$, it follows from Lemma~\ref{lem:tiling} that $\overline{T_n}\,\overline{Y}=\overline{S_n}$, which yields
\[
0=c^\lambda \overline{S_n}=c^\lambda\overline{T_n}\,\overline{Y}=\big(1+\omega^\lambda_{(2,1^{n-2})}\big)(c^\lambda\overline{Y}).
\]
Thus, $c^\lambda\overline{Y}=0$, as required. To see the equivalent conclusion, notice Lemma~\ref{lem:decomp} and that $(n)\notin\Lambda$ as $\omega^{(n)}_{(2,1^{n-2})}=n(n-1)/2$ by Lemma~\ref{lem:cencha}.
\qed
\end{proof}

\subsection{Proof of Theorem \ref{main-theorem1} and Corollary \ref{main-corollary1}}\label{subsec:proofofmaintheorem}

With the preparation so far, we now embark on the proof of Theorem~\ref{main-theorem1} and Corollary~\ref{main-corollary1}. They hold trivially for $n=2$. Hence we assume $n\geq3$ in the following. Let $(T_n,Y)$ be a tiling of $S_n$, and let $\lambda$ be a partition of $n$ such that $\sum_{x\in D(\lambda)}\xi(x)\geq0$.

Recall from Subsection \ref{subsec:repofSn} the definition of the $\mathbb{C}S_n$-module $M^\lambda$. By Lemmas \ref{lem:specht} and \ref{lem:spechtavatar}, there exists a $\mathbb{C}S_n$-isomorphism $\rho$ from $M^\lambda$ to $\bigoplus_{\mu\,\unrhd\, \lambda} K_{\mu\lambda}L^\mu$. Let $[s]$ and $[t]$ be arbitrary $\lambda$-tabloids. Then $h[s]=[t]$ for some $h\in S_n$, and so we derive from~\eqref{equ:6} that
\begin{align*}
c^{(n)}\rho([s]-[t])
&=c^{(n)}\big(\rho([s])-h\rho([s])\big)\\
&=\frac{1}{n!}\,\overline{S_n}\big(\rho([s])-h\rho([s])\big)
=\frac{1}{n!}\,\overline{S_n}\,\rho([s])-\frac{1}{n!}\,\overline{S_n}\,\rho([s])=0.
\end{align*}
This together with Lemma~\ref{lem:decomp} implies that
\begin{equation}\label{equ:1}
\rho([s]-[t])\in\bigoplus_{\substack{\mu\,\unrhd\, \lambda\\ \mu\neq (n)}} K_{\mu\lambda}L^\mu.
\end{equation}
For each $\mu\unrhd\lambda$ with $\mu\neq (n)$, we have by Lemma \ref{lem:cencha} that $\omega^\mu_ {(2,1^{n-2})}\geq\omega^\lambda_{(2,1^{n-2})}\geq0$ (in particular, $\omega^\mu_{(2,1^{n-2})}\neq -1$), and thus $\overline{Y}L^\mu=0$ by Lemma~\ref{lem:decomp}. It then follows from~\eqref{equ:1} that $\overline{Y}\rho([s]-[t])=0$, which is equivalent to $\overline{Y}([s]-[t])=0$. According to Lemma~\ref{lem:tiling}, $(T_n,Y^{-1})$ is also a tiling of $S_n$, whence we have $\overline{Y^{-1}}([s]-[t])=0$ in addition. Therefore,
\begin{equation}\label{equ:2}
\overline{Y}\,[s]=\overline{Y}\,[t]\ \text{ and }\ \overline{Y^{-1}}\,[s]=\overline{Y^{-1}}\,[t].
\end{equation}

Now fix a $\lambda$-tabloid $[s_0]$, and let $r$ be the numbers of elements $y$ in $Y$ such that $y^{-1}[s_0]=[s_0]$. For arbitrary $\lambda$-tabloids [$s_1$] and [$s_2$], it follows from~\eqref{equ:2} that $\overline{Y}\,[s_1]=\overline{Y}\,[s_0]$ and $\overline{Y^{-1}}\,[s_2]=\overline{Y^{-1}}\,[s_0]$.
Comparing the coefficients of $[s_0]$ in both sides of $\overline{Y^{-1}}\,[s_2]=\overline{Y^{-1}}\,[s_0]$, we obtain
\[
|\{y\in Y\mid y^{-1}[s_2]=[s_0]\}|=|\{y\in Y\mid y^{-1}[s_0]=[s_0]\}|=r.
\]
Then comparing the coefficients of $[s_2]$ in both sides of $\overline{Y}\,[s_1]=\overline{Y}\,[s_0]$, we infer that
\begin{align*}
|\{y\in Y\mid y[s_1]=[s_2]\}|&=|\{y\in Y\mid y[s_0]=[s_2]\}|\\
&=|\{y\in Y\mid y^{-1}[s_2]=[s_0]\}|=r.
\end{align*}
By the definition of $\lambda$-transitivity and $M^\lambda$, this means that for each ordered set partitions $P$ and $Q$ of shape $\lambda$, there are exactly $r$ permutations in $Y$ sending $P$ to $Q$.
Write $\lambda=(\lambda_1,\ldots,\lambda_\ell)$. Then there are precisely $n!/\lambda_1 !\cdots\lambda_\ell !$ ordered set partitions of shape $\lambda$, and so $r=|Y|/(n!/\lambda_1 !\cdots\lambda_\ell !)>0$. Thus, according to the definition of $\lambda$-transitivity, $Y$ is $\lambda$-transitive, as Theorem~\ref{main-theorem1} asserts. Moreover, since $r$ is an integer, $n!/\lambda_1 !\cdots\lambda_\ell !$ divides $|Y|=|S_n|/|T_n|=n!/|T_n|$. It follows that $1+n(n-1)/2=|T_n|$ divides $\lambda_1 !\cdots\lambda_\ell !$, which together with the Remark after Theorem~\ref{main-theorem1} implies Corollary~\ref{main-corollary1}.
\qed

\subsection{Other results}\label{subsec:corols}

\begin{corollary}[{A strengthened version of \cite[Theorem]{RT1966}}]\label{corol:corol1}
If $1+n(n-1)/2$ has a prime divisor $p\geq\sqrt{n}+1$, then $T_n$ does not tile $S_n$.
\end{corollary}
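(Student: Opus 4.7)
The plan is to derive a contradiction from Corollary~\ref{main-corollary1} by producing a partition $\lambda\vdash n$ that simultaneously satisfies: (a) $\sum_{x\in D(\lambda)}\xi(x)\geq 0$, so that Corollary~\ref{main-corollary1} forces $1+n(n-1)/2$ to divide $\lambda_1!\cdots\lambda_\ell!$; and (b) every part $\lambda_i<p$, so that $p\nmid\lambda_1!\cdots\lambda_\ell!$. Together with the hypothesis $p\mid 1+n(n-1)/2$, properties (a) and (b) yield the desired contradiction.

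First I would rewrite the hypothesis $p\geq\sqrt{n}+1$ as $(p-1)^2\geq n$, which guarantees that $n$ admits a partition fitting inside a $(p-1)\times(p-1)$ Young square. Concretely, writing $n=q(p-1)+r$ with $0\leq r<p-1$ forces $q\leq p-1$ (and $r=0$ when $q=p-1$), so the partition $\lambda^*=((p-1)^q,r)$ (omitting the last entry if $r=0$) has at most $p-1$ parts, each of size at most $p-1$. Consequently, the conjugate partition $(\lambda^*)'$ also fits inside the same $(p-1)\times(p-1)$ square, and both $\lambda^*$ and $(\lambda^*)'$ satisfy condition (b).

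Next I would invoke the standard fact that conjugation reflects the Young diagram across the main diagonal and hence negates each content: the content sums of $\lambda^*$ and $(\lambda^*)'$ are therefore negatives of each other, so at least one of them is nonnegative. Taking $\lambda$ to be whichever of the two has nonnegative content sum, both conditions (a) and (b) hold. Applying Corollary~\ref{main-corollary1} under the assumption that $T_n$ tiles $S_n$ then produces the chain $p\mid 1+n(n-1)/2\mid\lambda_1!\cdots\lambda_\ell!$, contradicting (b).

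The only non-routine step is the passage to the conjugate partition in order to secure a nonnegative content sum, and this is precisely what lets us relax the Rothaus--Thompson bound $p\geq\sqrt{n}+2$ to $p\geq\sqrt{n}+1$; without this trick, one would have to commit to a specific partition (such as $\lambda^*$ itself) and verify its content sum is nonnegative through direct computation, which is what effectively costs the extra~$+1$ in the original argument.
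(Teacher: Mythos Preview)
Your proof is correct but takes a genuinely different route from the paper. Both arguments start from the same partition $\lambda^*=((p-1)^q,r)$ with $n=q(p-1)+r$, and both exploit that $(p-1)^2\geq n$ forces $q\leq p-1$ (with $r=0$ when $q=p-1$), so that all parts are below $p$. The divergence is in verifying condition~(a). The paper commits to $\lambda^*$ itself and checks directly that
\[
\sum_{i}\lambda_i(\lambda_i-2i+1)=(p-1)(p-1-q)q+r(r-2q-1)\geq 0
\]
via a short case analysis on $q=p-1$ versus $q\leq p-2$. You instead sidestep this computation by pairing $\lambda^*$ with its conjugate $(\lambda^*)'$, using that conjugation negates the content sum, and taking whichever has nonnegative content sum; since both fit in the $(p-1)\times(p-1)$ box, condition~(b) survives either way. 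Your argument is cleaner and more conceptual; the paper's is slightly more informative in that it pins down $\lambda^*$ itself as always working. One remark: your closing comment that direct computation ``effectively costs the extra~$+1$'' is not accurate---the paper's direct computation already attains the improved bound $p\geq\sqrt{n}+1$, so the gain over Rothaus--Thompson comes from Corollary~\ref{main-corollary1}, not from the conjugation trick.
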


\begin{proof}
Suppose on the contrary that there exists some tiling $(T_n,Y)$ of $S_n$. Write $n=(p-1)q+r$ with nonnegative integers $q$ and $r$ such that $r\leq p-2$. Consider the partition $\lambda=(\lambda_1,\ldots,\lambda_\ell)=((p-1)^q,r)\vdash n$. Then
\begin{align}
\sum_{i=1}^\ell\lambda_i(\lambda_i-2i+1)&=\sum_{i=1}^q(p-1)(p-2i)+r(r-2q-1)\nonumber\\
&=(p-1)(p-1-q)q+r(r-2q-1).\label{equ:7}
\end{align}
By Corollary~\ref{main-corollary1}, it suffices to prove that~\eqref{equ:7} is nonnegative.
Since $n=(p-1)q+r\geq(p-1)q$ and $p\geq\sqrt{n}+1$, we have
\[
q\leq\frac{n}{p-1}\leq\frac{n}{\sqrt{n}}=\sqrt{n}\leq p-1.
\]
If $q=p-1$, then $\sqrt{n}=p-1=q$ and hence $r=n-(p-1)q=0$, which yields that~\eqref{equ:7} equals $0$. Now assume that $q\leq p-2$.
Then since the minimum of $r(r-2q-1)$ over integers $r$ is $-(q+1)q$, it follows that~\eqref{equ:7} is at least
\[
(p-1)(p-1-q)q-(q+1)q\geq\big((q+2)-1\big)\big((q+2)-1-q\big)q-(q+1)q=0.
\]
This completes the proof.
\qed
\end{proof}

\medskip
Next we give a \textbf{proof of Corollary~\ref{corol:corol2}}:

\smallskip\noindent
Consider the hook-like partition $\lambda=(n-k,1^k)\vdash n$. It satisfies $\sum_{x\in D(\lambda)}\xi(x)\geq0$ if and only if $n-k\geq k+1$. If $k<n/2$, then $Y$ is $\lambda$-transitive by Theorem~\ref{main-theorem1}. Thus we only need to deal with the case when $n=2k\geq4$ is even. As in the proof of Theorem~\ref{main-theorem1}, it suffices to show that in the decomposition $M^\lambda\cong \bigoplus_{\mu\,\unrhd\, \lambda} K_{\mu\lambda}L^\mu$ we have $\omega^\mu_{(2,1^{n-2})}\neq -1$ whenever $\mu\unrhd\lambda$. Clearly, $\omega^\lambda_{(2,1^{n-2})}=-k\neq-1$. If $\mu\unrhd\lambda$ and $\mu\neq\lambda$, then $\mu\unrhd (k,2,1^{k-2})$ and hence we deduce from Lemma~\ref{lem:cencha} that $\omega^\mu_{(2,1^{n-2})}\geq\omega^{(k,2,1^{k-2})}_{(2,1^{n-2})}=0$.
\qed

\medskip
Now consider the tilings of $S_n$ by $T_n^*$. Through a very similar argument as the above proof of Theorem~\ref{main-theorem1} and its corollaries, we give a \textbf{proof of Theorem~\ref{main-theorem2} and Corollary~\ref{main-corollary2}}:

\smallskip\noindent
By Lemma~\ref{lem:conjusum}, $\overline{T_n^*}\,u=u\,\overline{T_n^*}=\omega^\lambda_{(2,1^{n-2})}u$ for all $\lambda\vdash n$ and $u\in I^\lambda$. This implies that, similarly to Lemma~\ref{lem:necessary}, if $(T_n^*,Y)$ is a tiling of $S_n$ with $n\geq3$, then $c^\lambda\overline{Y}=0$ for each $\lambda\vdash n$ with $\lambda\neq (n)$ and $\omega^\lambda_{(2,1^{n-2})}\neq0$. Then by the argument in the proof of Theorem~\ref{main-theorem1} and Corollary~\ref{main-corollary1}, one proves Theorem~\ref{main-theorem2} and Corollary~\ref{main-corollary2}.
\qed

\section{Conjectures and discussions}\label{sec:conjecture}

In the proceeding sections we already see that, in a tiling $(T_n,Y)$ of $S_n$, the subset $Y$ is restricted in size, transitivity, scope where $\overline{Y}$ belongs to (see Lemma \ref{lem:necessary}). Some other requirements will also be revealed later in this section. Based on this information, we believe that such a subset $Y$ can hardly exist, and thus the following conjecture seems reasonable.

\begin{conjecture}\label{conj:conj1}
For $n\geq4$, neither $T_n$ nor $T_n^*$ tiles $S_n$.
\end{conjecture}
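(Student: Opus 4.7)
The plan is to attack Conjecture~\ref{conj:conj1} by combining the necessary conditions of Theorem~\ref{main-theorem1} and Theorem~\ref{main-theorem2}, sharpened in several directions, until they rule out a tiling for every $n\geq 3$. I describe the strategy for $T_n$; the $T_n^*$ case proceeds in parallel using the analogue of Lemma~\ref{lem:necessary} remarked in the proof of Theorem~\ref{main-theorem2}.

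The first step is to split $n$ into two regimes based on the arithmetic of $N:=1+n(n-1)/2$. In the \emph{generic} regime, $N$ has a prime factor $p\geq\sqrt n+1$, and Corollary~\ref{corol:corol1} already forbids a tiling; elementary estimates on the largest prime factor of integers near $n^2/2$ should show this regime covers all but a sparse set of $n$. For the \emph{exceptional} regime (every prime factor of $N$ is at most $\sqrt n$), I would iterate Corollary~\ref{main-corollary1} over a family of carefully chosen partitions: balanced two-row partitions $(\lceil n/2\rceil,\lfloor n/2\rfloor)$, balanced hooks $(n-k,1^k)$ near $k=\lfloor n/2\rfloor$, and balanced three-row partitions such as $(\lceil n/3\rceil,\lceil n/3\rceil,\lfloor n/3\rfloor)$. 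For each such $\lambda$ with nonnegative content sum, Legendre's $p$-adic valuation formula yields sharp divisibility tests; the target is to show that for every exceptional $n$, at least one $\lambda$ violates $N\mid\lambda_1!\cdots\lambda_\ell!$.

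The second step, for any stubborn $n$, is to exploit the stronger structural content of Lemma~\ref{lem:necessary}: $\overline{Y}$ must lie in $I^{(n)}\oplus\bigoplus_{\lambda\in\Lambda}I^\lambda$ with $\Lambda:=\{\lambda\vdash n\mid \omega^\lambda_{(2,1^{n-2})}=-1\}$. Since contents flip sign under conjugation of partitions, $\Lambda$ is paired with the set of partitions of content sum $+1$ and can be enumerated systematically. For many exceptional $n$, one expects $\Lambda=\emptyset$, which forces $\overline{Y}\in\mathbb{C}\overline{S_n}$ and hence $Y\in\{\emptyset,S_n\}$, both impossible as a tiling factor. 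For the remaining $n$, I would expand $\overline{Y}$ along primitive idempotents in each $I^\lambda$ and exploit the 0/1 nature of its coefficients in the standard basis of $\mathbb{C}S_n$ to derive a contradiction; the transitivity conclusion of Corollary~\ref{corol:corol2} provides additional quantitative control on these coefficients.

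\textbf{Main obstacle.} The $T_n^*$ case appears substantially harder. The analogous set $\Lambda^*:=\{\lambda\vdash n\mid \omega^\lambda_{(2,1^{n-2})}=0\}$ contains every self-conjugate partition of $n$ and so is nonempty for most $n$, often quite large. The ambient subspace for $\overline{Y}$ is correspondingly bigger, and divisibility obstructions from Corollary~\ref{main-corollary2} alone are unlikely to close out all $n$. Translating the character-theoretic constraints into a combinatorial contradiction for the 0/1 vector $\overline{Y}$ will probably require a new tool, such as a non-abelian analogue of Delsarte's linear-programming bound, or a direct argument exploiting the strong symmetries of the Cayley graph $\Cay(S_n,T_n^*)$ together with its total-perfect-code interpretation from Subsection~\ref{subsec:background}.
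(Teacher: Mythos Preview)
The statement you are attempting is Conjecture~\ref{conj:conj1}, which the paper explicitly leaves open; there is no proof in the paper to compare against. Section~\ref{sec:conjecture} only offers partial evidence and discusses why several natural approaches---eigenvalue (Hoffman-type) bounds, the Ellis--Friedgut--Pilpel junta theorem, forbidden-intersection results---currently fall short. Your proposal is therefore a research programme rather than a proof, and it should be read against the obstructions the paper records.

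There is a concrete gap in your second step. You write that ``for many exceptional $n$, one expects $\Lambda=\emptyset$,'' where $\Lambda=\{\lambda\vdash n\mid \omega^\lambda_{(2,1^{n-2})}=-1\}$. But the paper notes (in the eigenvalue subsection) that for every $n\geq14$ there exists $\lambda\vdash n$ with content sum $-1$, so $\Lambda\neq\emptyset$ for all $n\geq14$. Hence the mechanism you rely on to force $\overline{Y}\in\mathbb{C}\,\overline{S_n}$ is unavailable precisely in the range where you need it, and your fallback---expanding $\overline{Y}$ over $\bigoplus_{\lambda\in\Lambda}I^\lambda$ and exploiting $0/1$ coefficients---is not a method but a restatement of the problem. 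Similarly, the ``non-abelian Delsarte/LP bound'' you suggest for the $T_n^*$ case is essentially the weighted Hoffman bound of Lemma~\ref{prop:weighted}, which the paper carries out explicitly and shows yields only the \emph{tight} inequality $|Y|\leq n!/|T_n^*|$, with equality characterised exactly by the conclusion of Lemma~\ref{lem:necessary}; no contradiction arises.

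Your first step is more reasonable as heuristic triage, but it does not close the problem either: even granting that the set of $n$ with $1+n(n-1)/2$ being $\sqrt{n}$-smooth is sparse, it is not known to be finite, and the divisibility tests from Corollary~\ref{main-corollary1} over the specific families you list (two-row, hook, three-row) do not obviously exhaust such $n$---indeed the paper already observes that the divisibility criterion misses cases such as $n=6$ that require the eigenvalue argument instead. In short, each ingredient you propose is already examined in Section~\ref{sec:conjecture} and shown to be individually insufficient; a proof will need a genuinely new idea beyond combining them.
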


%It is worth noting that the number $1+n(n-1)/2$ is the size of $T_n$. If $(T_n,Y)$ is a tiling of $S_n$, then $Y$ has size $n!/|T_n|$ and so $n!$ is divisible by $|T_n|=1+n(n-1)/2$. It is noted by Rothaus and Thompson \cite{RT1966} that the integers $1$, $2$, $3$, $6$, $91$, $137$, $733$ and $907$ are the only integers less than $1000$ which fail to satisfy Theorem~\ref{RT-Theorem}. Interestingly, if $n=91$ then $1+n(n-1)/2=4096=2^{12}$ has a unique prime divisor $2$, and it seems impossible (even with the help of a computer) to rule out the existence of a perfect code in $\Cay(S_{91}, T_{91})$ by checking each subset of $S_{91}$ of size $91!/4096$.

It can be directly verified for $n=4$ that neither $T_n$ nor $T_n^*$ tiles $S_n$. Thus, assume $$n\geq5$$ in the following discussion. By Corollary~\ref{corol:corol2} and Theorem~\ref{main-theorem2}, if either $T_n$ or $T_n^*$ tiles $S_n$, then there exists some $\lfloor(n-2)/2\rfloor$-transitive subset $Y$ of $S_n$ such that $Y$ is not $A_n$ or $S_n$. Given any positive integers $\lambda_2\geq\dots\geq\lambda_\ell$, Martin and Sagan~\cite[Section~6]{MS2006} constructed infinitely many values for $\lambda_1\geq\lambda_2$ such that there exists a $\lambda$-transitive subset of $S_n$ with $\lambda=(\lambda_1,\lambda_2,\ldots,\lambda_\ell)$. For each positive integer $k$, by taking $\ell=k+1$ and $\lambda_2=\cdots=\lambda_\ell=1$ we see that there exist $k$-transitive subsets of $S_n$ for infinitely many values of $n$.
Since this only gives examples of $k$-transitive subsets of $S_n$ with $k/n\to0$ whereas $\lfloor(n-2)/2\rfloor/n\to1/2$, we pose the following conjecture, which will imply Conjecture~\ref{conj:conj1} for sufficiently large $n$.

\begin{conjecture}\label{conj:conj3}
For sufficiently large $n$, there is no $\lfloor(n-2)/2\rfloor$-transitive subset of $S_n$ other than $A_n$ and $S_n$.
\end{conjecture}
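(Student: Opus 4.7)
The plan is to combine the representation-theoretic framework of Section~\ref{sec:proofofmainresults} with the classification of finite simple groups. Put $k=\lfloor(n-2)/2\rfloor$ and $\lambda=(n-k,1^k)$. By the argument in the proof of Theorem~\ref{main-theorem1}, a subset $Y\subseteq S_n$ is $\lambda$-transitive if and only if the central idempotent $c^\mu$ annihilates $\overline{Y}$ for every $\mu\vdash n$ with $\mu\unrhd\lambda$ and $\mu\neq(n)$; equivalently, $\overline{Y}\in I^{(n)}\oplus\bigoplus_{\mu\,\not\unrhd\,\lambda}I^\mu$. For $k$ close to $n/2$, the set of forbidden irreducibles is large and imposes a rich system of linear constraints on $\overline{Y}$. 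The first step of the proof is to tabulate these constraints explicitly and characterize the ambient subspace in which $\overline{Y}$ must lie.

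The second step is to leverage this subspace together with the $\{0,1\}$-integrality of the coefficients of $\overline{Y}$ to force $Y$ into the subgroup structure of $S_n$. By the classification of finite simple groups, the only $k$-transitive \emph{subgroups} of $S_n$ for $k\geq 6$ (hence for $n\geq 14$) are $A_n$ and $S_n$, so a reduction to the subgroup case would conclude the argument. I note, however, that $S_n\setminus A_n$ is also $\lambda$-transitive (with parameter $r=(n-k)!/2$), so the statement of Conjecture~\ref{conj:conj3} should implicitly include this further exception alongside $A_n$ and $S_n$. A promising route to the reduction is to exploit the necessary scalar conditions $\sum_{y\in Y}\chi^\mu(y)=0$ for each admissible $\mu$, combined with sharp character-ratio bounds on $S_n$ and the forced cardinality $|Y|=r\cdot n!/(n-k)!$, aiming to show that any candidate $Y$ outside the three exceptions violates one of the constraints.

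The main obstacle is the absence of a general rigidity theorem reducing highly $\lambda$-transitive subsets to cosets of subgroups. Indeed, the constructions of Martin and Sagan~\cite{MS2006} produce many $\lambda$-transitive subsets of $S_n$ with no group-theoretic origin when $k$ is small, so the required rigidity must stem specifically from the threshold $k\geq\lfloor(n-2)/2\rfloor$. Establishing that the linear constraints of $\lambda$-transitivity, in combination with the $0/1$ structure of $\overline{Y}$, admit essentially no non-group solutions once $k$ reaches $n/2$ appears to require new techniques beyond the current toolkit -- possibly a refined character-sum analysis, an entropy or concentration argument on $S_n$, or Fourier-analytic methods on the symmetric group -- and this is the central challenge toward Conjecture~\ref{conj:conj3}.
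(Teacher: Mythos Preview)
The statement you are attempting to prove is labeled a \emph{Conjecture} in the paper, and the paper offers no proof of it; it is posed as an open problem, motivated by Corollary~\ref{corol:corol2}, Theorem~\ref{main-theorem2}, and the Martin--Sagan constructions. So there is no ``paper's own proof'' to compare your proposal against.

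Your proposal is, by your own account, not a proof but a plan with an explicitly unresolved central step. The first step (the Fourier-analytic characterization of $\lambda$-transitivity via vanishing of $c^\mu\overline{Y}$ for $\mu\unrhd\lambda$, $\mu\neq(n)$) is correct and standard. The second step, however, is where the actual content of the conjecture lies, and you have not carried it out: you invoke the classification of finite simple groups to handle $k$-transitive \emph{subgroups}, but the entire difficulty is that $Y$ is merely a subset. Your own final paragraph concedes that the ``reduction to the subgroup case'' is not established and ``appears to require new techniques beyond the current toolkit.'' That is precisely why the paper states this as a conjecture rather than a theorem. In short, the gap you identify is genuine, and nothing in your outline closes it; the character-sum, entropy, and Fourier-analytic suggestions remain speculative.

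One further remark: your observation that $S_n\setminus A_n$ is also $\lambda$-transitive is correct, but note that for a \emph{normalized} tiling $(T_n,Y)$ or $(T_n^*,Y)$ one has $\mathrm{id}\in Y$, and in the context of the paper's motivation (deriving Conjecture~\ref{conj:conj1} from Conjecture~\ref{conj:conj3}) one may normalize so that $\mathrm{id}\in Y$, ruling out $S_n\setminus A_n$. If Conjecture~\ref{conj:conj3} is to stand on its own, your point about this additional exception is well taken.
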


In the rest of this section we focus on Conjecture~\ref{conj:conj1}, providing more insights into this conjecture. Denote $T_n^2:=\{t_1t_2\mid t_1,t_2\in T_n\}$ and $(T_n^*)^2:=\{t_1t_2\mid t_1,t_2\in T_n^*\}$. Then
\[
T_n^2\setminus\{\id\}=\CC_{(2,1^{n-2})}\cup\CC_{(3,1^{n-3})}\cup\CC_{(2^2,1^{n-4})}\ \text{ and }\ (T_n^*)^2\setminus\{\id\}=\CC_{(3,1^{n-3})}\cup\CC_{(2^2,1^{n-4})}.
\]
As usual, $\alpha(\Gamma)$ denotes the independence number of a graph $\Gamma$, namely, the maximum size of an independent set (coclique) of $\Gamma$. Let us start with the following observation.

\begin{lemma}\label{lem:independent}
Let $\Sigma_n:=\Cay(S_n,T_n^2\setminus\{\id\})$ and $\Sigma_n^*:=\Cay(S_n,(T_n^*)^2\setminus\{\id\})$. Then the following statements hold:
\begin{enumerate}[{\rm(a)}]
\item $(T_n,Y)$ is a tiling of $S_n$ if and only if $Y$ is an independent set of size $n!/(1+n(n-1)/2)$ in $\Sigma_n$.
\item $(T_n^*,Y)$ is a tiling of $S_n$ if and only if $Y$ is an independent set of size $2(n-2)!$ in $\Sigma_n^*$.
\item $T_n$ tiles $S_n$ if and only if $\alpha(\Sigma_n)\geq n!/(1+n(n-1)/2)$.
\item $T_n$ tiles $S_n$ if and only if $\alpha(\Sigma_n)=n!/(1+n(n-1)/2)$.
\item $T_n^*$ tiles $S_n$ if and only if $\alpha(\Sigma_n^*)\geq2(n-2)!$.
\item $T_n^*$ tiles $S_n$ if and only if $\alpha(\Sigma_n)=2(n-2)!$.
\end{enumerate}
\end{lemma}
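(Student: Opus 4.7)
The plan is a direct translation between the tiling condition $\overline{S_n} = \overline{T_n}\,\overline{Y}$ of Lemma~\ref{lem:tiling}(a) and the condition of being a suitably sized independent set in $\Sigma_n$, and analogously for $T_n^*$. Parts~(c) and~(d) will then drop out by quantifying over $Y$.

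For~(a), I would first observe that a tiling $(T_n,Y)$ forces $|T_n|\cdot|Y|=n!$, and hence $|Y|=n!/(1+n(n-1)/2)$. For the uniqueness of representations, the key observation is that two pairs $(x_1,y_1)\neq(x_2,y_2)$ in $T_n\times Y$ satisfy $x_1y_1=x_2y_2$ if and only if $y_1\neq y_2$ (since $y_1=y_2$ would force $x_1=x_2$) and $y_2y_1^{-1}=x_2^{-1}x_1\in T_n^{-1}T_n=T_n^2$ (using that $T_n$ is inverse-closed). Equivalently, distinct $y_1,y_2\in Y$ violate the unique-representation property precisely when $y_2y_1^{-1}\in T_n^2\setminus\{\id\}$. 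Since $T_n^2\setminus\{\id\}$ is inverse-closed, this is exactly adjacency of $y_1$ and $y_2$ in $\Sigma_n$. Hence uniqueness in the tiling is equivalent to $Y$ being an independent set in $\Sigma_n$, which together with the cardinality gives~(a). Part~(b) runs the same argument with $T_n^*$ in place of $T_n$, yielding $|Y|=n!/|T_n^*|=2(n-2)!$; note that although $(T_n^*)^2$ does contain $\id$ (since every transposition is an involution), this is precisely why $\id$ is excised from the connection set, leaving the non-identity products in $\CC_{(3,1^{n-3})}\cup\CC_{(2^2,1^{n-4})}$ as the edges of $\Sigma_n^*$.

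For~(c) and~(d), I would use that $T_n$ and $T_n^*$ are inverse-closed, so left- and right-tiling coincide, and it suffices to test the existence of some $Y$ producing a tiling. By~(a), such a $Y$ for $T_n$ exists if and only if $\Sigma_n$ admits an independent set of the prescribed size, which in particular requires that prescribed size to be an integer; this gives~(c). For~(d), I would remark that $n(n-1)/2$ automatically divides $n!$ (since $2(n-2)!$ is always an integer), so no separate divisibility hypothesis appears in the statement.

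I anticipate no serious obstacle: the lemma is essentially a bookkeeping reformulation of tiling in terms of cocliques in Cayley graphs. The only subtlety worth tracking is the inverse-closedness of $T_n$ and $T_n^*$, which ensures that $\Sigma_n$ and $\Sigma_n^*$ are genuine undirected, loop-free graphs and validates the identification of adjacency with the non-identity elements of $T_n^2$ and $(T_n^*)^2$ respectively.
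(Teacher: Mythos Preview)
Your proposal is correct and follows essentially the same route as the paper: both arguments translate the tiling condition into the cardinality constraint $|Y|=n!/|T_n|$ together with the requirement that no two distinct elements of $Y$ differ by a nonidentity element of $T_n^2$ (using that $T_n$ is inverse-closed), identify the latter as independence in $\Sigma_n$, and then read off~(c) and~(d) from~(a) and~(b). Your treatment is slightly more explicit about why $y_1=y_2$ forces $x_1=x_2$ and why the integrality hypothesis is automatic in~(d), but there is no substantive difference in strategy.
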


\begin{proof}
By the definition of a tiling, $(T_n,Y)$ is a tiling of $S_n$ if and only if $|Y|=n!/|T_n|$ and there do not exist distinct $y_1$ and $y_2$ in $Y$ and distinct $t_1$ and $t_2$ in $T_n$ such that $t_1y_1=t_2y_2$. Notice that $|T_n|=1+n(n-1)/2$
and
\begin{equation}\label{equ:9}
t_1y_1=t_2y_2\;\Leftrightarrow\;y_1y_2^{-1}=t_1^{-1}t_2\;\Leftrightarrow\;y_1y_2^{-1}=t_1t_2.
\end{equation}
We conclude that $(T_n,Y)$ is a tiling of $S_n$ if and only if $|Y|=n!/(1+n(n-1)/2)$ and $Y$ is an independent set of $\Cay(S_n,T_n^2\setminus\{\id\})=\Sigma_n$. This proves statement~(a), and a similar argument gives a proof of statement~(b). Note that~\eqref{equ:9} also shows $T_ny_1\cap T_ny_2=\varnothing$ for any distinct $y_1$ and $y_2$ from an independent set of $\Sigma_n$. Hence $\alpha(\Sigma_n)\leq|S_n|/|T_n|=n!/(1+n(n-1)/2)$, and statements~(c) and~(d) follow from~(a). Similarly, statements~(e) and~(f) are consequences of~(b).
\qed
\end{proof}

\subsection{Fourier transform of Boolean functions on $S_n$}

For a group $G$ and a complex-valued function $f$ on $G$, the {\em Fourier transform} of $f$ is the matrix-valued function $\widehat{f}$ on the set of irreducible representations of $G$ such that
\[
\widehat{f}(\rho):=\frac{1}{|G|}\sum_{g\in G}f(g)\rho(g)
\]
for each irreducible representation $\rho$ of $G$. In particular, for a subset $Y$ of $S_n$, the Fourier transform of the characteristic function $\mathrm{1}_Y$ satisfies
\[
\widehat{1_Y}(\rho)=\frac{1}{n!}\sum_{g\in S_n}1_Y(g)\rho(g)=\frac{1}{n!}\sum_{g\in Y}\rho(g).
\]
Note that the only partitions $\lambda\vdash n$ that do not satisfy $\lambda\unrhd(3, 1^{n-3})$ are $\lambda=(2^k,1^{n-2k})$ with nonnegative $k$, which are such that $\sum_{x\in D(\lambda)}\xi(x)<-1$ as $n\geq5$.
If $(T_n, Y)$ is a tiling of $S_n$, then we derive from Lemmas~\ref{lem:cencha} and~\ref{lem:necessary} that $\widehat{1_Y}$ is supported only on irreducible representations $S^\lambda$ with $\lambda\unrhd(3, 1^{n-3})$. Similarly, if $(T_n^*, Y)$ is a tiling of $S_n$, we also have that $\widehat{1_Y}$ is supported only on irreducible representations $S^\lambda$ with $\lambda\unrhd(3, 1^{n-3})$.

As a statement concerning Boolean functions on $S_n$ and their Fourier transforms,~\cite[Theorem~27]{EFP2011} would imply Conjecture~\ref{conj:conj1}. In fact, based on the above conclusion,~\cite[Theorem~27]{EFP2011} asserts that the set $Y$ in any tiling $(T_n,Y)$ or $(T_n^*,Y)$ is a disjoint union of left cosets of stabilizers in $S_n$ of $n-3$ points.
Then by Lemma~\ref{lem:tiling}, the set $Y^{-1}$ in any tiling $(T_n,Y)$ or $(T_n^*,Y)$ is a disjoint union of left cosets of stabilizers in $S_n$ of $n-3$ points.
In this case, for any $y\in Y$, there exists a $3$-cycle $(i,j,k)$ in $S_n$ such that $y^{-1}(i,j,k)\in Y^{-1}$, which implies that $(i,k,j)y\in Y$, contradicting Lemma~\ref{lem:independent}.

Unfortunately,~\cite[Theorem~27]{EFP2011} is found to be false \cite{EFP2017,Filmus2017}.

\subsection{Eigenvalues of Cayley graphs}

Recall from the concept of an $r$-tiling (see Subsection~\ref{subsec:background}) that a $1$-tiling of a group $G$ is just a tiling of $G$. The following lemma gives a necessary condition for $r$-tilings in terms of eigenvalues of Cayley digraphs.

\begin{lemma}[{\cite[Lemma~1]{Terada2004}}]\label{lem:eigen}
If $(X,Y)$ is an $r$-tiling of a group $G$ for some positive integer $r$ such that $Y\neq G$, then $0$ is an eigenvalue of $\Cay(G,X)$.
\end{lemma}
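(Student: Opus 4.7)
My plan is to reformulate the statement in the group algebra $\mathbb{C}G$ and then produce an explicit kernel vector. Unwinding the definition, the $r$-tiling condition on $(X,Y)$ is equivalent to the identity $\overline{X}\,\overline{Y}=r\overline{G}$ in $\mathbb{C}G$ (compare coefficients of each $g\in G$: the number of factorizations $g=xy$ with $x\in X$ and $y\in Y$ is exactly $r$). On the other hand, writing the adjacency matrix $A$ of $\Cay(G,X)$ in the standard basis $\{g\}_{g\in G}$, the entry $A_{g,h}=[hg^{-1}\in X]$ coincides with the matrix of left multiplication by $\overline{X}$ up to transpose. Hence $0$ is an eigenvalue of $A$ if and only if the operator $L_{\overline{X}}\colon v\mapsto\overline{X}v$ on $\mathbb{C}G$ fails to be injective, i.e.\ $\overline{X}$ is a zero divisor.

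The key calculation exploits that $\overline{G}$ is invariant under right translation: $\overline{G}g=\overline{G}$ for every $g\in G$. Consequently
\[
\overline{X}\bigl(\overline{Y}-\overline{Y}g\bigr)=\overline{X}\,\overline{Y}-(\overline{X}\,\overline{Y})g=r\overline{G}-r\overline{G}g=0,
\]
so $v_g:=\overline{Y}-\overline{Y}g$ lies in $\ker L_{\overline{X}}$ for every $g\in G$. It remains to choose $g$ so that $v_g\neq 0$, equivalently $Yg\neq Y$. If no such $g$ existed, then $Yg=Y$ would hold for all $g\in G$; picking any $y\in Y$ (possible since $Y$ is nonempty in any tiling) would then give $yG\subseteq Y$, forcing $Y=G$ and contradicting the hypothesis $Y\neq G$. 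Hence such a $g$ exists, $v_g$ is a nonzero element of $\ker L_{\overline{X}}$, and $0$ is an eigenvalue of $\Cay(G,X)$.

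The argument is short and formal; the only point requiring care is matching the adjacency-matrix convention with the left/right conventions in $\mathbb{C}G$. This is not a genuine obstacle, since in the finite-dimensional algebra $\mathbb{C}G$ the operators $L_{\overline{X}}$ and $R_{\overline{X}}$ are simultaneously injective (both fail to be injective iff $\overline{X}$ is a zero divisor), so the witness $v_g$ above—or its mirror $g\overline{Y}-\overline{Y}$, obtained from the symmetric identity $\overline{Y}\cdot\overline{X}=r\overline{G}$ when one prefers to work with right multiplication—handles either convention.
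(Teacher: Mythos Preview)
The paper does not supply its own proof of this lemma; it simply cites \cite[Lemma~1]{Terada2004}. So there is nothing to compare against, and the task reduces to checking your argument on its merits.

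Your main line is correct. The $r$-tiling hypothesis is exactly $\overline{X}\,\overline{Y}=r\overline{G}$ in $\mathbb{C}G$; right-translation invariance of $\overline{G}$ gives $\overline{X}(\overline{Y}-\overline{Y}g)=0$; and $Y\neq G$ with $Y\neq\emptyset$ guarantees some $g$ with $Yg\neq Y$, producing a nonzero kernel vector for $L_{\overline{X}}$. The transpose remark is also fine: with the paper's convention $A_{g,h}=[hg^{-1}\in X]$, one checks $(A^{T}v)_g=\sum_{x\in X}v_{x^{-1}g}$, which is precisely the coefficient of $g$ in $\overline{X}v$, so $A^{T}$ represents $L_{\overline{X}}$ and hence $A$ and $L_{\overline{X}}$ share eigenvalues.

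One small slip: your closing parenthetical invokes a ``symmetric identity $\overline{Y}\cdot\overline{X}=r\overline{G}$'', but an $r$-tiling $(X,Y)$ does \emph{not} in general imply that $(Y,X)$ is an $r$-tiling, so this identity need not hold. Fortunately you do not need it: your observation that in the finite-dimensional unital algebra $\mathbb{C}G$ the operators $L_{\overline{X}}$ and $R_{\overline{X}}$ are injective simultaneously (one-sided inverses being two-sided there) already handles either adjacency convention. I would simply delete the clause about the symmetric identity.
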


Based on Lemma~\ref{lem:decomp}, it is easy to know (refer to~\cite{Zieschang1988}) the eigenvalues and eigenvectors of a Cayley digraph $\Cay(G,S)$ with $S$ normal (closed under conjugation) in $G$.

\begin{lemma}\label{lem:cayley}
Let $G$ be a group of order $m$. Then there exist pairwise orthogonal vectors $v_1,\dots,v_m\in\mathbb{C}^m$ such that, for each normal subset $S$ of $G$, the adjacency matrix of $\Cay(G,S)$ has eigenvectors $v_1,\dots,v_m\in\mathbb{C}^m$ with eigenvalues
\[
\frac{1}{\chi(\mathrm{id})}\sum_{g\in S}\chi(g),
\]
where $\chi$ runs over the irreducible characters of $G$.
\end{lemma}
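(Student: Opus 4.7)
The plan is to realize the adjacency matrix of $\Cay(G,S)$ as an operator on the group algebra $\mathbb{C}G$, use normality of $S$ to reduce to Schur's lemma, and read off the eigenvalues via traces, obtaining a single basis that diagonalizes every such adjacency matrix simultaneously.

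First I would identify $\mathbb{C}^m$ with $\mathbb{C}G$ via the natural basis $\{e_g:g\in G\}$ indexed by the group elements. A direct computation from the definition $A_{g,h}=[hg^{-1}\in S]$ shows that the adjacency matrix of $\Cay(G,S)$ is the matrix of left multiplication by $\overline{S^{-1}}$ in this basis (equivalently, of right multiplication by $\overline{S}$, depending on one's convention); normality of $S$ yields normality of $S^{-1}$, so this operator lies in the center of $\mathbb{C}G$.

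Next I would fix an orthogonal decomposition of $\mathbb{C}G$ that is independent of $S$. By Lemma~\ref{lem:decomp}, $\mathbb{C}G=\bigoplus_{i=1}^r I_{\chi_i}$ is the decomposition into simple two-sided ideals, each $I_{\chi_i}$ being the sum of the minimal left ideals affording $\chi_i$. With respect to the standard Hermitian inner product on $\mathbb{C}G$ (the group elements forming an orthonormal basis) the left regular representation is unitary, so this decomposition can be refined to a pairwise orthogonal direct sum of minimal left ideals, within each of which one may take an orthogonal basis. Concatenating these bases produces the desired pairwise orthogonal vectors $v_1,\dots,v_m\in\mathbb{C}^m$, depending only on $G$. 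The heart of the argument is then Schur's lemma: because $\overline{S^{-1}}$ is central, its left action on any minimal left ideal $L$ affording an irreducible character $\chi$ is a scalar $\lambda_\chi$, so every $v_j$ is automatically an eigenvector of the adjacency matrix. To evaluate $\lambda_\chi$, take the trace of the restriction to $L$: using $\dim L=\chi(\mathrm{id})$ one obtains
\[
\chi(\mathrm{id})\lambda_\chi=\chi(\overline{S^{-1}})=\sum_{g\in S}\chi(g^{-1}),
\]
which, on recalling that $\chi(g^{-1})=\overline{\chi(g)}$ in general and equals $\chi(g)$ whenever all characters are real-valued (as is the case for $S_n$), matches the formula in the statement.

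The only genuinely subtle step is the orthogonality assertion: one must ensure that both the splitting $\mathbb{C}G=\bigoplus_i I_{\chi_i}$ and the further splitting of each $I_{\chi_i}$ into minimal left ideals can be chosen orthogonal with respect to the chosen inner product. This is standard once one observes that the regular representation is unitary, so every invariant subspace has an invariant orthogonal complement, allowing one to decompose iteratively until only minimal invariant subspaces remain.
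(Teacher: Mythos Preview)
Your proposal is correct and follows essentially the same route the paper points to: the paper does not spell out a proof but simply says the result is ``easy to know'' from Lemma~\ref{lem:decomp} (the Wedderburn decomposition of $\mathbb{C}G$) together with the reference~\cite{Zieschang1988}, which is precisely the machinery you invoke---realize the adjacency matrix as multiplication by a central element, apply Schur's lemma on each isotypic block, and read off the scalar via the trace. Your observation about $\chi(g^{-1})$ versus $\chi(g)$ is apt; since $\chi\mapsto\overline{\chi}$ permutes the irreducible characters, the multiset of eigenvalues is unaffected, and in the paper's applications to $S_n$ the distinction disappears anyway.
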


Then by Lemma~\ref{lem:cencha}, the eigenvalues of $\Cay(S_n,T_n)$ are $1+\sum_{x\in D(\lambda)}\xi(x)$ with partitions $\lambda$ of $n$. This result in conjunction with Lemma~\ref{lem:eigen} can be used to show the nonexistence of tilings $(T_n,Y)$ of $S_n$ for some small values of $n$. For example, there is no partition $\lambda\vdash6$ with content sum $\sum_{x\in D(\lambda)}\xi(x)=-1$, and so $0$ is not an eigenvalue of $\Cay(S_6,T_6)$, which implies the nonexistence of tilings $(T_6,Y)$ of $S_6$ (note that this nonexistence result cannot be obtained from Lemma~\ref{main-corollary1} or~\ref{corol:corol1}).
For a general $n$, however, this approach does not work, as there always exists $\lambda\vdash n$ with $\sum_{x\in D(\lambda)}\xi(x)=-1$ whenever $n\geq14$. Similarly, since $n$ always has a partition with content sum $0$, Lemma~\ref{lem:eigen} does not help to exclude the tilings $(T_n^*,Y)$ of $S_n$.

Recall from Lemma \ref{lem:independent} the definitions of $\Sigma_n$ and $\Sigma_n^*$. Inspired by Lemma~\ref{lem:independent}, we now turn to the independence numbers of $\Sigma_n$ and $\Sigma_n^*$, which are well known to be related to the eigenvalues of these Cayley graphs, for instance, by the following result~\cite[Theorem~12]{EFP2011}.

\begin{lemma}[Weighted version of Hoffman bound]\label{prop:weighted}
Let $\Gamma$ be a graph on $m$ vertices, let $\Gamma_j$ be a $d_j$-regular spanning subgraph of $\Gamma$ for $j\in\{1,\ldots,r\}$ such that all $\Gamma_j$ share a common orthonormal system of eigenvectors $v_1,\ldots,v_m$, and let $\beta_1,\ldots,\beta_r\in\mathbb{R}$. For each $i\in\{1,\ldots,m\}$, let $\ell_i:=\sum_{j=1}^r\beta_j\ell_{ij}$, where $\ell_{ij}$ is the eigenvalue of $v_i$ in $\Gamma_j$. If $Y$ is an independent set of $\Gamma$, then setting $d:=\sum_{j=1}^r\beta_j d_j$ and $\ell_{\mathrm{min}}:=\min_{i\in\{1,\ldots,m\}}\ell_i$, we have
\[
\frac{|Y|}{m}\leq\frac{-\ell_{\mathrm{min}}}{d-\ell_{\min}};
\]
moreover, the equality holds only if the characteristic vector $1_Y$ of $Y$ is a linear combination of the all-$1$'s vector and those $v_i$ with $\ell_i=\ell_{\min}$.
\end{lemma}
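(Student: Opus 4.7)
The plan is to execute a weighted adaptation of the classical Hoffman eigenvalue bound on the matrix $A := \sum_{j=1}^r \beta_j A_j$, where $A_j$ denotes the adjacency matrix of $\Gamma_j$. Two inputs drive the argument. First, since each $\Gamma_j$ is a spanning subgraph of $\Gamma$, every edge of $\Gamma_j$ is also an edge of $\Gamma$, so independence of $Y$ in $\Gamma$ yields $1_Y^{\top} A_j\, 1_Y = 0$ for each $j$, and therefore $1_Y^{\top} A\, 1_Y = 0$. Second, the $d_j$-regularity of $\Gamma_j$ gives $A_j \mathbf{1} = d_j \mathbf{1}$, from which $A\mathbf{1} = d\, \mathbf{1}$.

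The second step is to decompose $1_Y = (|Y|/m)\mathbf{1} + f$, where $f$ is the component orthogonal to $\mathbf{1}$. A direct calculation then yields $\|f\|^2 = |Y|(1 - |Y|/m)$ and, using the two identities above, $f^{\top} A f = -d |Y|^2/m$. The common orthonormal eigenbasis $v_1,\ldots,v_m$ (inherited from the $A_j$'s, with eigenvalues $\ell_i = \sum_j \beta_j \ell_{ij}$) supplies the Rayleigh-type estimate $f^{\top} A f \geq \ell_{\min}\|f\|^2$. Substituting the two expressions for $f^{\top} A f$ and $\|f\|^2$ and rearranging (noting that $d - \ell_{\min} > 0$ whenever the bound is nontrivial) produces $|Y|/m \leq -\ell_{\min}/(d-\ell_{\min})$.

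For the equality clause, equality in $f^{\top} A f \geq \ell_{\min}\|f\|^2$ forces $f$ to lie in the $\ell_{\min}$-eigenspace of $A$, namely $\mathrm{span}\{v_i : \ell_i = \ell_{\min}\}$. Combined with $1_Y = (|Y|/m)\mathbf{1} + f$, this expresses $1_Y$ as a linear combination of the all-ones vector and those $v_i$ with $\ell_i = \ell_{\min}$, which is exactly the claimed characterization.

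The main point requiring a touch of care is the interaction between $\mathbf{1}$ and the shared basis: a priori $\mathbf{1}$ need not be a scalar multiple of any single $v_i$. However, since $\mathbf{1}$ is a joint eigenvector of each $A_j$ with eigenvalue $d_j$, it lies in the joint eigenspace $\{v : A_j v = d_j v \text{ for all } j\}$, which is contained in $\mathrm{span}\{v_i : \ell_i = d\}$; hence the orthogonal splitting $1_Y = (|Y|/m)\mathbf{1} + f$ is fully compatible with expansions in the $v_i$-basis, and the equality description comes out exactly as stated. Modulo this small compatibility check, the argument is a routine weighted instance of the standard Hoffman bound.
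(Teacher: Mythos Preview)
The paper does not supply its own proof of this lemma; it is quoted verbatim from~\cite[Theorem~12]{EFP2011} and used as a black box. So there is nothing in the paper to compare against.

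That said, your argument is correct and is the standard proof. Setting $A=\sum_j\beta_jA_j$, using $1_Y^{\top}A_j1_Y=0$ from independence, decomposing $1_Y=(|Y|/m)\mathbf{1}+f$ with $f\perp\mathbf{1}$, and applying the Rayleigh inequality $f^{\top}Af\geq\ell_{\min}\|f\|^2$ is exactly how the weighted Hoffman bound is derived in~\cite{EFP2011}. Your handling of the equality case is also correct: equality in the Rayleigh step forces $f$ into $\mathrm{span}\{v_i:\ell_i=\ell_{\min}\}$, which is precisely the stated characterization. The final paragraph about the compatibility of $\mathbf{1}$ with the $v_i$-basis is sound but not strictly needed for the conclusion as phrased, since the lemma only asks that $1_Y$ lie in $\mathrm{span}(\{\mathbf{1}\}\cup\{v_i:\ell_i=\ell_{\min}\})$, and you have already obtained $1_Y=(|Y|/m)\mathbf{1}+f$ with $f$ in the latter span.
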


By Lemma~\ref{lem:cayley}, if we take $\Gamma=\Sigma_n$ in Lemma~\ref{prop:weighted} with
\[
\Gamma_1=\Cay\big(S_n,\CC_{(2,1^{n-2})}\big),\ \ \Gamma_2=\Cay\big(S_n,\CC_{(3,1^{n-3})}\big),\ \ \Gamma_3=\Cay\big(S_n,\CC_{(2^2,1^{n-4})}\big),
\]
then the $\ell_i$'s turn out to be
\[
\ell_\lambda:=\beta_1\omega^{\lambda}_{(2,1^{n-2})}+\beta_2\omega^{\lambda}_{(3,1^{n-3})}+\beta_3\omega^{\lambda}_{(2^2,1^{n-4})}
\]
with $\lambda$ running over the partitions of $n$. According to~\cite[Pages~150~and~152]{Katriel996},
\begin{align*}
  \omega^{\lambda}_{(2,1^{n-2})}&=\sum_{x\in D(\lambda)}\xi(x), \\
  \omega^{\lambda}_{(3,1^{n-3})}&=\sum_{x\in D(\lambda)}\xi(x)^2-\dfrac{n(n-1)}{2}, \\
  \omega^{\lambda}_{(2^2,1^{n-4})}&=\dfrac{1}{2}\Big(\sum_{x\in D(\lambda)}\xi(x)\Big)^2-\dfrac{3}{2}\sum_{x\in D(\lambda)}\xi(x)^2 +\dfrac{n(n-1)}{2}.
\end{align*}
Denote $d_1:=\big|\CC_{(2,1^{n-2})}\big|$, $d_2:=\big|\CC_{(3,1^{n-3})}\big|$ and $d_3:=\big|\CC_{(2^2,1^{n-4})}\big|$. Then
\[
d_1=\frac{n(n-1)}{2},\ \ d_2=\frac{n(n-1)(n-2)}{3},\ \ d_3=\frac{n(n-1)(n-2)(n-3)}{8}.
\]
Suppose that $(T_n,Y)$ is a tiling of $S_n$. Then a combination of Lemmas~\ref{lem:independent} and~\ref{prop:weighted} gives
\begin{equation}\label{equ:8}
\frac{1}{|T_n|}=\frac{|Y|}{n!}\leq\frac{-\ell_{\min}}{\beta_1d_1+\beta_2d_2+\beta_3d_3-\ell_{\min}}
\end{equation}
for any $\beta_1,\beta_2,\beta_3\in\mathbb{R}$, where $\ell_{\min}$ is the minimum of $\ell_\lambda$ for $\lambda\vdash n$.
For $(\beta_1,\beta_2,\beta_3)=(2,3,2)$, the minimum of $\ell_{\min}$ is taken when $\sum_{x\in D(\lambda)}\xi(x)=-1$, and so~\eqref{equ:8} turns out to be
\[
\frac{1}{|T_n|}\leq\frac{\frac{n(n-1)}{2}+1}{2d_1+3d_2+2d_3+\frac{n(n-1)}{2}+1}=\frac{2}{n^2-n+2}.
\]
However, this is a tight bound for $|Y|/n!$ instead of a contradiction, and the ``moreover'' part in Lemma~\ref{prop:weighted} only yields the conclusion of Lemma~\ref{lem:necessary}. Similarly, if $(T_n^*,Y)$ is a tiling of $S_n$, then taking $(\beta_1,\beta_2,\beta_3)=(0,3,2)$ we obtain $\ell_{\min}$ when $\sum_{x\in D(\lambda)}\xi(x)=0$, which gives the tight bound
\[
\frac{1}{|T_n^*|}\leq\frac{\frac{n(n-1)}{2}}{3d_2+2d_3+\frac{n(n-1)}{2}}=\frac{2}{n^2-n}
\]
for $|Y|/n!$. Currently, we do not know whether it works to prove Conjecture~\ref{conj:conj1} by taking certain values of $\beta_1,\beta_2,\beta_3\in\mathbb{R}$, possibly depending on $n$.

\subsection{Forbidden intersection problem for permutations}

For $i\in\{1,\ldots,n\}$, a subset $Y$ of $S_n$ is said to {\em avoid intersection $i$} if the number of fixed points of $y_1y_2^{-1}$ is not equal to $i$ for any $y_1,y_2\in Y$. By Lemma~\ref{lem:independent}, the set $Y$ in any tiling $(T_n,Y)$ or $(T_n^*,Y)$ of $S_n$ avoids intersection $n-3$. Thus, a necessary condition for $T_n$ or $T_n^*$ to tile $S_n$ is the existence of a set of size $n!/(1+n(n-1)/2)$ or $2(n-2)!$, respectively, in $S_n$ that avoids intersection $n-3$.
The forbidden intersection problem for permutations concerns the maximum size for a subset of $S_n$ that avoids intersection $i$, which has attracted considerable attention with breakthroughs achieved recently~\cite{EL2022,KL2017,KZ2024}. However, existing results pertain only to relatively small $i$ (for example, $i=O(\sqrt{n}/\log_2n)$~\cite[Theorem~2]{KZ2024}), while the results we need for Conjecture~\ref{conj:conj1} is $i=n-3$, as posed in the following question.

\begin{question}
What is the maximum size for a subset of $S_n$ that avoids intersection $n-3$?
\end{question}

\section*{Acknowledgements}
The authors would like to thank the anonymous referee for helpful comments that have improved the presentation. We also wish to thank Professors William Yong-Chuan Chen and Xin-Gui Fang for their advice and inspiration. Teng Fang was supported by the Scientific Research Foundation for Advanced Talents of Suqian University (no.~2025XRC027). Binzhou Xia acknowledges the support of ARC Discovery Project DP250104965.

\end{document}